\newcommand{\C}{\mathbb{C}}
\newcommand{\Q}{\mathbb{Q}}
\newcommand{\Z}{\mathbb{Z}}
\newcommand{\op}{\operatorname}
\newtheorem{theorem}{Theorem}[section]
\newtheorem{remark}[theorem]{ Remark}
\newtheorem{conjecture}[theorem]{Conjecture}
\newtheorem{corollary}[theorem]{Corollary}
\newtheorem{question}[theorem]{Question}
\newtheorem{proposition}[theorem]{Proposition}
\newtheorem{definition}[theorem]{Definition}
\newtheorem{definition/lemma}[theorem]{Definition/Lemma}
\title{On the saturation conjecture for $\operatorname{Spin}(2n)$}
\author{Joshua Kiers}
\begin{document}
\maketitle

\begin{abstract} In this paper we examine the saturation conjecture on decompositions of tensor products of irreducible representations for complex semisimple algebraic groups of type $D$ (the even \emph{spin} groups: Spin$(2n)$ for $n\ge 4$ an integer), extending work done by Kumar-Kapovich-Millson on Spin(8). Our main theorem asserts that the saturation conjecture holds for Spin(10) and Spin(12): for all triples of dominants weights $\lambda,\mu,\nu$ such that $\lambda+\mu+\nu$ is in the root lattice, and for any $N>0$,
$$
\left(V(\lambda)\otimes V(\mu)\otimes V(\nu)\right)^G \ne 0
$$
if and only if 
$$
\left(V(N\lambda)\otimes V(N\mu)\otimes V(N\nu)\right)^G\ne 0,
$$
for $G=\op{Spin}(10)$ or $\op{Spin}(12)$. 
Some related results for groups of other types are listed as well. 
\end{abstract}

\section{Introduction}
In this paper we examine the saturation conjecture on decompositions of tensor products of irreducible representations for complex semisimple algebraic groups of type $D$ (the \emph{spin} groups: Spin$(2n)$ for $n\ge 4$ an integer), extending work done by Kapovich-Kumar-Millson in \cite{KKM} on Spin(8). Our main theorem is that the saturation conjecture holds for Spin(10) and Spin(12). Some related results for groups of other types are listed as well. 

The saturation conjecture can be approached by studying a certain polyhedral cone, the \emph{saturated tensor cone}, whose defining inequalities are known to be minimally parametrized by products in the cohomology ring of relevant spaces $G/P$ (see \cite{BK} and \cite{Ress}, as well as the survey \cite{Kumar}). We introduce a computationally feasible method (based on the polynomial realization of \cite{BGG}) for calculating cup products in the singular (or deformed) cohomology of any $G/P$, and we indicate some pseudocode for implementing this method on a computer in order to find the desired inequalities. This can also be used to find extremal rays of the cone from the formulas of \cite{BKiers}. 
The method lends itself to (partial) parallelization, and it was with the crucial aid of the parallel-capable supercomputer {\tt Longleaf}, maintained at the University of North Carolina, that we obtained the aforementioned results. 

\subsection{The Saturation Conjecture}

Let $G$ be a semisimple algebraic group over $\C$. Fix a Borel subgroup $B\subset G$ and maximal torus $H\subset B$. Let $W=N_G(H)/H$ be the Weyl group of $H$ in $G$. The choice of $H\subset B$ determines a root system $\Phi\subset \mathfrak{h}^*$ with base (\emph{simple roots}) $\Delta=\{\alpha_1,\hdots,\alpha_r\}\subset \Phi$, where $r=\dim H$ is the rank of $G$. The $\Z$-span of the $\alpha_i$ is called the \emph{root lattice}.

Let $\omega_1,\hdots,\omega_r\in \mathfrak{h}^*$ be the associated dominant fundamental weights. The elements of the $\mathbb{Z}_{\ge0}$-span of $\{\omega_1,\hdots,\omega_r\}$ are the \emph{dominant weights}, and to each such $\lambda$ is associated a unique, irreducible representation of $G$, denoted by $V(\lambda)$. 

It is a standard problem to determine when an irreducible component $V(\nu)$ appears in a tensor product of irreducible representations $V(\lambda)\otimes V(\mu)$. This question is, in general, hard to answer. However, the question whether $V(N\nu)$ appears in $V(N\lambda)\otimes V(N\mu)$ for some $N\ge 1$ has a well-known answer. One then may ask if and how $N$ may be controlled. Replacing $\nu$ by its dual weight, the questions may be posed symmetrically, which we now make precise. 

\begin{definition}
Define the saturated tensor cone $\mathcal{C}(G)$ to be the set of triples $\lambda_1,\lambda_2,\lambda_3$ of  dominant weights whose sum is in the root lattice and for which 
\begin{align}\label{nonz}
(V(N\lambda_1)\otimes V(N\lambda_2)\otimes V(N\lambda_3))^G\ne 0
\end{align}
for some integer $N>0$. Define the tensor cone $\mathcal{R}(G)$ to be the set of triples $\lambda_1,\lambda_2,\lambda_3$ of dominant weights satisfying (\ref{nonz}) with $N=1$ (in which case $\lambda_1+\lambda_2+\lambda_3$ must lie in the root lattice). 
\end{definition}

There exists a system of inequalities (listed below) determining this cone as a subset of $(\mathfrak{h}^*)^3$; an overview can be found in the survey paper of Kumar \cite[Section 6]{Kumar}. 

%The cone $\mathcal{C}(G)$ has a natural additive structure:

%\begin{proposition}
%Suppose $\lambda_1,\lambda_2,\lambda_3,N$ satisfy (\ref{nonz}), and suppose $\lambda_1',\lambda_2',\lambda_3',N'$ are another such tuple. Then for some integer $M>0$, 
%$$
%\Big(V(M(\lambda_1+\lambda_1'))\otimes V(M(\lambda_2+\lambda_2'))\otimes V(M(\lambda_3+\lambda_3'))\Big)^G\ne 0
%$$
%\end{proposition}
%\begin{proof}
%By the Borel-Weil theorem, $(V(N\lambda_1)\otimes V(N\lambda_2)\otimes V(N\lambda_3))^G\ne 0$ if and only if the line bundle 
%$$
%\mathcal{L}_{N\lambda_1}\boxtimes\mathcal{L}_{N\lambda_2}\boxtimes\mathcal{L}_{N\lambda_3}
%$$
%on $(G/B)^3$ possesses a nonzero $G$-invariant global section, call it $\sigma$. If $\sigma'$ is an analogous section for $\mathcal{L}_{N'\lambda_1'}\boxtimes\mathcal{L}_{N'\lambda_2'}\boxtimes\mathcal{L}_{N'\lambda_3'}$, then $\sigma^{N'}\cdot(\sigma')^N$ is a nonzero $G$-invariant global section for 
%$$
%\mathcal{L}_{NN'(\lambda_1+\lambda_1')}\boxtimes\mathcal{L}_{NN'(\lambda_2+\lambda_2')}\boxtimes\mathcal{L}_{NN'(\lambda_3+\lambda_3')}.
%$$
%The result follows with $M=NN'$. 
%\end{proof}

The cone $\mathcal{C}(G)$ has a natural additive structure via $(\lambda_1,\lambda_2,\lambda_3)+(\lambda_1',\lambda_2',\lambda_3') = (\lambda_1+\lambda_1',\lambda_2+\lambda_2',\lambda_3+\lambda_3')$ , making it a monoid with identity $(0,0,0)$. This follows from the Borel-Weil theorem, and the same argument shows that $\mathcal{R}(G)$ is a monoid as well. 
Note that, by definition, $\mathcal{R}(G) \subseteq \mathcal{C}(G)$. The saturation conjecture asks about the converse: 
\begin{conjecture}\label{satc}
For $G$ simple, simply-connected, and simply-laced, 
\begin{align*}%\label{issat}
\mathcal{R}(G)=\mathcal{C}(G).
\end{align*}
\end{conjecture}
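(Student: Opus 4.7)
Since Conjecture \ref{satc} is open in general, my plan is to establish it for specific groups $G$---for instance $\op{Spin}(10)$ and $\op{Spin}(12)$, as promised in the abstract---by a finite, computer-assisted verification. The strategy has three steps: first, obtain an explicit inequality description of $\mathcal{C}(G)$; second, compute a Hilbert basis for the monoid of lattice points of $\mathcal{C}(G)$ whose coordinate sum lies in the root lattice; third, check directly for each basis element $(\lambda_1,\lambda_2,\lambda_3)$ that the tensor invariant space $(V(\lambda_1)\otimes V(\lambda_2)\otimes V(\lambda_3))^G$ is nonzero. Since $\mathcal{R}(G)$ is itself a monoid, the third step suffices to prove $\mathcal{R}(G)=\mathcal{C}(G)$ for each $G$ treated this way.

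For the first step I would use the Belkale--Kumar and Ressayre parametrization mentioned in the introduction: the facets of $\mathcal{C}(G)$ correspond to triples $(w_1,w_2,w_3)$ of minimal Weyl coset representatives, one for each maximal parabolic $P\supset B$, whose deformed Schubert product in $H^*(G/P)$ equals the point class. The central computational idea, drawn from the polynomial realization of \cite{BGG}, is to represent each Schubert class by an explicit polynomial in the simple roots and to carry out cup products by polynomial multiplication modulo the ideal of positive-degree $W$-invariants. This reduces the enumeration of facets to large but highly parallelizable linear algebra over $\Q$. Once the facet list is assembled, standard tools such as Normaliz produce the Hilbert basis of the appropriate sub-monoid of $\mathcal{C}(G)$, and for each basis element the dimension of the corresponding invariant space can then be computed via the Weyl character formula using a package such as LiE.

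The dominant obstacle is sheer size. The cone lives in a space of dimension $3r$---fifteen for $\op{Spin}(10)$ and eighteen for $\op{Spin}(12)$---the relevant Weyl groups have orders $1920$ and $23040$ respectively, and the pool of candidate $w$-triples at each maximal parabolic is correspondingly enormous. Without an efficient BGG-style implementation of the cup product, step one is out of reach, and even with it the supercomputing resources alluded to in the introduction are essential. A secondary, more conceptual obstacle is that the outcome of step three is not known until the computation completes: if any Hilbert basis element fails the invariant-nonvanishing check it would furnish a genuine counterexample and falsify the conjecture for that $G$. For $\op{Spin}(10)$ and $\op{Spin}(12)$ no such failure occurs, but there is no a priori reason this must continue to be the case for larger spin groups, so beyond a certain rank one would have to supplement the scheme above with a theoretical argument ruling out such pathological basis elements.
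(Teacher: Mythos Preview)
Your three-step plan matches the paper's approach for $\op{Spin}(10)$ almost exactly: enumerate the Belkale--Kumar facet inequalities via BGG polynomials, feed them to \texttt{Normaliz} to obtain the Hilbert basis ($505$ elements), and verify membership in $\mathcal{R}(G)$. One difference worth noting is that the paper does not check every Hilbert basis element directly with \texttt{LiE}; instead it observes that all $505$ lie on some regular facet $\mathcal{F}(\vec w,P)$ and invokes Roth's reduction theorem (Corollary~\ref{linchpin}): since saturation is already known for each possible Levi $L^{ss}$ (types $D_4$, $A_1\times A_3$, $A_2\times A_1\times A_1$, $A_4$), membership in $\mathcal{R}(G)$ follows without computing any invariant dimensions. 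Your direct \texttt{LiE} check is also carried out in the paper, but only as corroboration.

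For $\op{Spin}(12)$, however, your proposed pipeline hits a genuine computational wall. The paper reports $12144$ facet inequalities, and explicitly states (Section~\ref{expl}) that it is \emph{not computationally feasible} to obtain the Hilbert basis from this inequality description, even on the supercomputer. The paper's workaround is to change the input type: rather than constraints, it feeds \texttt{Normaliz} a (redundant) list of generators, namely the type~I and type~II extremal rays of $\mathcal{C}(G)$ computed from the explicit formulas of \cite{BKiers} (recalled in Section~\ref{taketwo}). This yields $105343$ candidate rays, from which \texttt{Normaliz}'s primal algorithm extracts the $3470$ Hilbert basis elements in manageable time. Of these, $3442$ lie on a regular facet and are handled by Roth's reduction to Levis of types $D_5$, $A_1\times D_4$, $A_2\times A_3$, $A_3\times A_1\times A_1$, $A_5$ (using part~(a) for $D_5$); the remaining $28$ are checked directly in \texttt{LiE}. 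So your scheme needs two amendments to go through for $D_6$: replace the inequality input by the \cite{BKiers} ray formulas, and incorporate the Roth--Levi reduction so that the inductive structure (saturation for $D_5$ feeding into $D_6$) is available.
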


For $G$ of type $A$, Conjecture \ref{satc} is true, as demonstrated by Knutson and Tao in \cite{KT}. Furthermore, Kapovich, Kumar, and Millson proved this conjecture for $G=\text{Spin}(8)$ (type $D_4$) \cite{KKM}. It is known that if $G$ is not of simply-laced type, \ref{satc} fails: see \cite{E}, \cite{KM}, and the discussion in \cite{Kumar}. The question is still open for types $D$ and $E$ in general. The main theorem of this paper is 
\begin{theorem}\label{Maine}
The saturation conjecture holds for 
\begin{enumerate}[label = (\alph*)]
\item $G=\text{Spin}(10)$ (type $D_5$) and 
\item $G=\text{Spin}(12)$ (type $D_6$).
\end{enumerate}
\end{theorem}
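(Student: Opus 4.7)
The plan follows the computational roadmap sketched in the introduction. By the Belkale--Kumar and Ressayre theorems, the real cone generated by $\mathcal{C}(G)$ in $(\mathfrak{h}^*)^3$ is cut out by a finite, explicit list of linear inequalities, one for each triple $(u,v,w)\in (W^P)^3$ of minimal coset representatives (for $P\subset G$ a maximal parabolic) such that the corresponding Schubert cup product is a non-zero multiple of the point class in the \emph{deformed} cohomology ring of $G/P$. The first step is to run the polynomial-BGG cup-product algorithm on each of the $r=5$ (resp.\ $r=6$) maximal parabolics of $\op{Spin}(10)$ (resp.\ $\op{Spin}(12)$) in order to enumerate these triples and record the resulting inequalities of $\mathcal{C}(G)$.

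Next, I would feed the inequality system---together with the affine-lattice constraint $\lambda_1+\lambda_2+\lambda_3\in \Z\Phi$---into standard polyhedral software (e.g.\ Normaliz or 4ti2) to compute a Hilbert basis of $\mathcal{C}(G)$ as an affine monoid: a finite list of monoid generators $(\lambda_1^{(i)},\lambda_2^{(i)},\lambda_3^{(i)})$, $i=1,\ldots,m$, whose non-negative integer combinations exhaust $\mathcal{C}(G)$. The formulas of \cite{BKiers} for extremal rays referenced in the introduction can assist in certifying the output.

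The verification step then requires, for each $i$, a direct check that $\bigl(V(\lambda_1^{(i)})\otimes V(\lambda_2^{(i)})\otimes V(\lambda_3^{(i)})\bigr)^G\neq 0$, done via a Weyl-character, LiE, or explicit branching-rule computation (each feasible for any single triple of moderate-level weights). Once every generator is confirmed to lie in $\mathcal{R}(G)$, the monoid property of $\mathcal{R}(G)$ noted after Definition 1.2 forces the non-trivial inclusion $\mathcal{C}(G)\subseteq \mathcal{R}(G)$, and combined with the automatic $\mathcal{R}(G)\subseteq \mathcal{C}(G)$ this proves saturation.

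The main obstacle is computational scale, not conceptual difficulty. For $D_6$ the sets $W^P$ are large enough that the number of coset triples to test is on the order of tens of millions per parabolic, so brute-force testing of each cup product is infeasible on a single processor. The point of the polynomial-BGG realization is precisely that it reduces each individual test to a fast polynomial manipulation, and---crucially---that the tests are mutually independent, so the workload parallelizes cleanly on {\tt Longleaf}. A secondary risk is that the Hilbert basis of $\mathcal{C}(G)$ itself might be very large, in which case the final verification loop becomes the bottleneck; the positive conclusion of Theorem \ref{Maine} is evidence that in these two cases the basis remains tractable. A structural difficulty, by contrast to the Spin$(8)$ case of \cite{KKM}, is that no building-theoretic saturation-factor-one argument is available, so some version of the above direct computation seems unavoidable.
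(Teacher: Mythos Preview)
Your outline is conceptually sound and matches the paper's approach for part~(a) almost exactly: enumerate the Belkale--Kumar inequalities, hand them to {\tt Normaliz}, obtain the Hilbert basis (505 elements for $D_5$), and verify membership in $\mathcal{R}(G)$. Two points of divergence are worth flagging.

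First, for part~(b) the paper reports that the route \emph{inequalities $\to$ {\tt Normaliz} $\to$ Hilbert basis} is \emph{not} computationally feasible for $D_6$ (12\,144 inequalities); the computation simply does not terminate. What actually works is to use the extremal-ray formulas of \cite{BKiers} not merely to ``certify the output,'' as you suggest, but as the \emph{primary} input to {\tt Normaliz}: one generates a (highly redundant, $\sim 10^5$-element) list of rays of $\mathcal{C}(G)$ via the Type~I/Type~II induction from Levi cones, and runs {\tt Normaliz}'s primal algorithm on that generator description. This finishes in hours where the constraint description does not finish at all. So the role you assign to \cite{BKiers} is understated; for $D_6$ it is essential.

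Second, you propose verifying every Hilbert basis element directly in {\tt LiE}. That works (and the paper does it as a sanity check), but the paper's main argument is more structural: by Roth's reduction theorem, any Hilbert basis element lying on a regular facet $\mathcal{F}(\vec w,P)$ inherits membership in $\mathcal{R}(G)$ from saturation for the Levi $L^{ss}$, and all relevant Levis are products of type-$A$ factors and $D_4$ or $D_5$, for which saturation is already known. For $D_5$ \emph{every} Hilbert basis element lies on a regular facet, so no direct checks are strictly needed; for $D_6$ only $28$ elements fail to lie on a regular facet and require an explicit {\tt LiE} verification. Incorporating this reduction both shortens the endgame and gives the inductive structure that your write-up is missing.
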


The proof of this theorem will be given in Section \ref{expl}. We generally follow the approach of \cite{KKM}: the proof reduces to finding a finite set of generators for $\mathcal{C}(G)$ and verifying that these generators each belong to $\mathcal{R}(G)$. 
For part (a), we are able to produce the defining inequalities for $\mathcal{C}(G)$ and use software to deduce a generating set from these; this is exactly the \cite{KKM} approach. For part (b), the inequalities are too many in number, and accordingly we find a (redundant) set of extremal rays for $\mathcal{C}(G)$ based on the formulas of \cite{BKiers}; we then use software to deduce the minimal generating set from these rays. 

The methods we use are related to those of Pasquier and Ressayre found in \cite{PR}, where they answer a generalized saturation question for embedded subgroups in some specific instances. 

Additionally, we list a summary of computational results - number of (irredundant) inequalities, number of Hilbert basis elements, number of extremal rays - pertaining to the saturated tensor cones of types $A$, $C$, and $D$ and of small rank. For several of these examples, such computations have already been presented in the literature, and we verify that our results agree. In principal, similar computational results could be obtained for type $B$ (dual to type $C$) and the exceptional types $E,F,G$. 

Finally, we include a discussion of certain Hilbert basis elements for $\mathcal{C}(\operatorname{Spin}(10))$ which fail to have the ``Fulton scaling property.'' It was conjectured and proven that all elements of type $A$ cones have this property, but a strictly weaker statement holds for general type. 

\subsection{Acknowledgements}

The author thanks R. Rim\'anyi and S. Kumar for helpful discussions surrounding Proposition \ref{orange}, S. Kumar for bringing Corollary \ref{linchpin} to light and for suggesting some references, and P. Belkale for suggesting changes to the manuscript. 

The author also thanks S. Sarangi and the ITS Research Computing team at UNC-CH for help using {\tt Longleaf}. 

\section{Notation and Preliminaries}
We fix the following notation:

\begin{enumerate}[label = $\bullet$]
\item $G$ is a simply-connected complex semisimple algebraic group, with Lie algebra $\mathfrak{g}$;

\item $B$ is a fixed Borel subgroup of $G$, with Lie algebra $\mathfrak{b}$;

\item $H\subset B$ is a fixed maximal torus of $G$, with Lie algebra $\mathfrak{h}$;

\item $\mathfrak{h}^*$ is the vector space dual to $\mathfrak{h}$;

\item $\Phi\subset \mathfrak{h}^*$ is the root system of $\mathfrak{h}$ in $\mathfrak{g}$;

\item $\Phi^+$ is the set of positive roots w.r.t. $\mathfrak{b}$, and $\Delta=\{\alpha_1,\hdots,\alpha_r\}\subset \Phi^+$ is the set of simple roots;

\item $\{\omega_1,\hdots,\omega_r\}\subset \mathfrak{h}^*$ is the set of dominant fundamental weights;

\item $\{x_1,\hdots,x_r\}$ is the dual basis for $\mathfrak{h}$ relative to $\Delta$;

\item $\{\alpha_1^\vee,\hdots,\alpha_r^\vee\}$ is the dual basis for $\mathfrak{h}$ relative to the fundamental weights;

\item $W=N_G(H)/H$ is the Weyl group of $G$, with longest element $w_0$;

\item $\sigma_\gamma\in W$ is the reflection across the hyperplane $\gamma=0\subset \mathfrak{h}$ for $\gamma\in \mathfrak{h}^*$;

\item $\ell(w)$ denotes the length of an element $w\in W$;

\item if $P\supset B$ is a standard parabolic subgroup with Lie algebra $\mathfrak{p}$, then $\Delta(P)$ denotes the subset of simple roots whose negatives appear in $\mathfrak{p}\subset \mathfrak{g}$;

\item $L$ denotes the Levi subgroup of $P$, and $L^{ss}$ the semisimple part of $L$;

\item $W_P\subset W$ denotes the Weyl group of $P$ (that is, $N_L(H)/H$), with longest element $w_0^P$;

\item $W^P$ is the set of minimal-length left coset representatives of $W_P$ in $W$;

\item if $w\in W$ (resp., $w\in W^P$), then define $X_w = \overline{BwB}$ (resp., $X_w^P = \overline{BwP}$), a subvariety of $G/B$ (resp., $G/P$) of dimension $\ell(w)$;

\item $\mu(X_w)$ (resp., $\mu(X_w^P)$) is the associated fundamental class in $H_{2\ell(w)}(G/B)$ (resp., $H_{2\ell(w)}(G/P)$);

\item $[X_w]$ (resp., $[X_w^P]$) is the Poincar\'e dual to $\mu(X_w)$ (resp., $\mu(X_w^P)$) and is an element in $H^{2(\dim G/B-\ell(w))}(G/B)$ (resp., $H^{2(\dim G/P-\ell(w))}(G/P)$);

\item the cup product in $H^*(G/B)$ or $H^*(G/P)$ will be denoted by $\cdot$, and the deformed cup product by $\odot_0$;

\item $\rho$ is the half-sum of positive roots, and $\rho^L$ is the half-sum of positive roots for the root system of $L$;

\item for $w\in W^P$, $\chi_w := \rho-2\rho^L+w^{-1}\rho$;

\item for $v,w\in W$, $\beta\in \Phi$, $v\xrightarrow{\beta}w$ means $w = \sigma_\beta v$ and $\ell(w) = \ell(v)+1$; 

\item if, furthermore, both $v,w\in W^P$, we write $v\xrightarrow{\beta}w\in W^P$. 
\end{enumerate}

\subsection{Inequalities for the Tensor Cone}\label{all}
Let $\lambda_1,\lambda_2,\lambda_3$ be dominant weights whose sum is in the root lattice. Then by \cite{BK}, 
$$
(V(N\lambda_1)\otimes V(N\lambda_2)\otimes V(N\lambda_3))^G\ne 0 
$$
for some integer $N>0$ (i.e., $(\lambda_1,\lambda_2,\lambda_3)\in \mathcal{C}(G)$) if and only if for every maximal standard parabolic $P=P_i\subset G$ and every triple $(w_1,w_2,w_3)\in (W^P)^3$ satisfying 
\begin{align}\label{product}
[X_{w_1}^P]\odot_0[X_{w_2}^P]\odot_0[X_{w_3}^P]=[X_e^P],
\end{align}
the inequality 
\begin{align}\label{ineqs}
\left(\sum_{j=1}^3w_j^{-1}\lambda_j\right)(x_i)\le 0
\end{align}
holds. Note that this elucidates the monoidal structure of $\mathcal{C}(G)$, since the inequalities (\ref{ineqs}) are linear. 

By the definition of the deformed product $\odot_0$, a triple $(w_1,w_2,w_3)\in (W^P)^3$ satisfies (\ref{product}) if and only if it satisfies
\begin{align}\label{lightning}
[X_{w_1}^P]\cdot[X_{w_2}^P]\cdot[X_{w_3}^P]=[X_e^P]\text{ and }\left(\chi_{w_1}+\chi_{w_2}+\chi_{w_3}-\chi_{1}\right)(x_i)=0;
\end{align}
where $\chi_w$ is as defined above. 

Fixing a basis for $\mathfrak{h}^*$ will allow the inequalities to be understood by a computer; see Section \ref{CALC} for discussion of the inequalities. The computer software {\tt Normaliz} \cite{Normaliz}, among others, is capable of reporting various characteristics of the cone $\mathcal{C}(G)$ given these defining inequalities. 

\subsection{Facets of the Tensor Cone}

It was demonstrated by Ressayre in \cite{Ress} that the inequalities (\ref{ineqs}) are irredundant. Therefore, the subcones
$$
\mathcal{F}(\vec w,P) = \left\{\vec\lambda \in \mathcal{C}(G) \mid \left(\sum_{j=1}^3w_j^{-1}\lambda_j\right)(x_i)= 0\right\}
$$
given by $(w_1,w_2,w_3),P$ satisfying (\ref{product}) form regular facets of $\mathcal{C}(G)$; i.e., they are codimension 1 faces (hence facets) and not contained in any dominant chamber wall $\{\lambda_i(\alpha_j^\vee)=0\}$ (hence regular). The only other facets of $\mathcal{C}(G)$ are those coming from the dominant criterion: each $\lambda_i$ must be a dominant weight; i.e., $\lambda_i(\alpha_j^\vee)\in \Z_{\ge0}$ for each $j$.

\subsection{Extremal rays of $\mathcal{C}(G)$}\label{taketwo}

The $D_5$ calculation can be carried out once the inequalities have been generated. However, for the $D_6$ calculation we will need to generate the extremal rays of $\mathcal{C}(G)$ according to the formulas given in \cite{BKiers}, which we recall here. 

Every extremal ray of $\mathcal{C}(G)$ lies on a facet $\mathcal{F}(\vec w,P)$ (see \cite{BKiers}*{Lemma 5.4}). Here we list the rays lying on a given $\mathcal{F}(\vec w,P)$. 

\subsubsection{Type I rays of $\mathcal{F}(\vec w,P)$}

Let $j,\ell$ be such that $v\xrightarrow{\alpha_\ell}w_j$ (this implies $v\in W^P$ as well). Then there is a ray $r_{j,\ell}$ defined as follows. Set $u_i = w_i$ for all $i$ except for $u_j = v$. Writing
\begin{align}\label{tI}
r_{j,\ell} = \left(\sum_{k=1}^rc_k^{(1)}\omega_k,\sum_{k=1}^rc_k^{(2)}\omega_k,\sum_{k=1}^rc_k^{(3)}\omega_k\right)
\end{align}
in the basis of fundamental weights, $c_k^{(i)}$ is the number $c$ in 
\begin{align}\label{product2}
[X_{\hat u_1}^P]\cdot[X_{\hat u_2}^P]\cdot[X_{\hat u_3}^P]=c[X_e^P]
\end{align}
if $u_i\xrightarrow{\alpha_k}\sigma_{\alpha_k}u_i\in W^P$, and $0$ otherwise, where $\hat u_m = u_m$ for all $m$ except for $\hat u_i = \sigma_{\alpha_k}u_i$. 

By \cite{BKiers}*{Theorem 1.6}, $r_{j,\ell}$ gives an extremal ray of $\mathcal{C}(G)$ on the face $\mathcal{F}(\vec w,P)$, and rays arising in this manner are called ``Type I'' on the facet (indeed, they could be Type II on an adjacent facet; there is not a firm dichotomy).

\subsubsection{Type II rays of $\mathcal{F}(\vec w,P)$} 

The Type I rays of $\mathcal{F}(\vec w,P)$ do not give all extremal rays on that facet. There is a subcone $\mathcal{F}_2\subset \mathcal{F}(\vec w,P)$ whose extremal rays  - call them ``Type II'' - are the remaining extremal rays of $\mathcal{F}(\vec w,P)$. As explained in \cite{BKiers}*{Section 9}, there is a surjection of cones 

$$
\operatorname{Ind}: \mathcal{C}(L^{ss})\twoheadrightarrow \mathcal{F}_2
$$
given by the following formula. For a triple of weights $\mu_1,\mu_2,\mu_3\in \mathcal{C}(L^{ss})\subset \mathfrak{h}_{L^{ss}}^*$, extend each $\mu_i$ to an element of $\mathfrak{h}^*$, requiring that $\mu_i(x_j)=0$ for the $j$ such that $\alpha_j\not\in \Delta(P)$. With slight abuse of notation, we use $\mu_i$ to refer to the so-obtained elements of $\mathfrak{h}^*$. Then 

\begin{align}\label{tII}
\operatorname{Ind}(\mu_1,\mu_2,\mu_3) = (w_1\mu_1,w_2\mu_2,w_3\mu_3) - \sum_{j=1}^s \sum_{\ell}^{'}w_j\mu_j(\alpha_\ell^\vee)\cdot r_{j,\ell},
\end{align}
where the second sum is over those $\ell$ satisfying $v\xrightarrow{\alpha_\ell}w_j$ (i.e., those for which there exists a ray $r_{j,\ell}$). 

Although extremal rays of $\mathcal{C}(L^{ss})$ may be sent to a non-extremal ray of $\mathcal{F}_2$ or even to $0$ under $\operatorname{Ind}$, every (``Type II'') extremal ray of $\mathcal{F}_2$ is the image of an extremal ray of $\mathcal{C}(L^{ss})$.

\section{Reduction to smaller groups}

Using either the inequalities of Section \ref{all} or the extremal rays of Section \ref{taketwo}, standard techniques and the aid of a computer yield the Hilbert basis of the cone $\mathcal{C}(G)$. The Hilbert basis is the unique minimal set of monoid generators (over $\Z$) of the cone $\mathcal{C}(G)$. 

Once the Hilbert basis is obtained, the question remains whether each basis element is in fact a member of $\mathcal{R}(G)$. Greatly reducing that burden is the following result of Roth (see \cite{Roth}). 

\begin{theorem}\label{roreduct}
Suppose $(w_1,w_2,w_3),P$ satisfy (\ref{product}). Let $(\lambda_1,\lambda_2,\lambda_3)\in \mathcal{F}(\vec w,P)$. Define $L^{ss}$ to be the semisimple part of $P$, and set $\overline{\lambda_j} = w_j^{-1}\lambda_j, j=1,2,3$. Then there exists an isomorphism
\begin{align*}%\label{crucial}
\left(V(\lambda_1)\otimes V(\lambda_2)\otimes V(\lambda_3)\right)^G \simeq
\left(V(\overline{\lambda_1})\otimes V(\overline{\lambda_2})\otimes V(\overline{\lambda_3})\right)^{L^{ss}}.
\end{align*}

\end{theorem}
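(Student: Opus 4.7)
The plan is to prove this reduction via a geometric invariant theory argument, mirroring the approach that underpins Roth's treatment (cf.\ also Belkale--Kumar \cite{BK}).

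First, I would reformulate both sides using Borel--Weil: the left-hand space is identified with the $G$-invariant global sections of a line bundle $\mathcal{L}_{\vec\lambda}$ on $(G/B)^3$, while the right-hand space is the $L^{ss}$-invariant sections of an analogous bundle on $(L/(B\cap L))^3$. The Schubert point $\xi := (w_1B, w_2B, w_3B) \in (G/B)^3$ and its image in $(G/P)^3$ provide the geometric bridge between them.

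Second, I would analyze the one-parameter subgroup $\tau(t) = \exp(tx_i)$, where $x_i$ is the fundamental coweight of the simple root $\alpha_i$ with $P=P_i$. A direct computation shows that the Hilbert--Mumford weight of $\xi$ with respect to $\tau$ and the linearization $\mathcal{L}_{\vec\lambda}$ equals $\bigl(\sum_j w_j^{-1}\lambda_j\bigr)(x_i)$, which vanishes by the facet hypothesis $(\lambda_1,\lambda_2,\lambda_3) \in \mathcal{F}(\vec w, P)$. Hence $\xi$ is a $\tau$-fixed, strictly semistable point. The cohomological identity (\ref{product}) is equivalent to the assertion that the three translated Schubert tangent spaces at $\xi$ meet transversely under the \emph{deformed} cup product; in GIT language, the $\tau$-positive and $\tau$-negative weight pieces of the tangent space to $(G/P)^3$ at $\xi$ pair perfectly, and only the $\tau$-zero Levi part contributes to the invariants.

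Third, I would apply Luna's slice theorem to identify the $G$-invariant sections on a neighborhood of the orbit $G\cdot\xi$ with $L$-invariant sections on the $\tau$-fixed subvariety through $\xi$. Pulling back by $(w_1^{-1}, w_2^{-1}, w_3^{-1})$ to transport $\xi$ to the base point, the line bundle $\mathcal{L}_{\vec\lambda}$ becomes $\mathcal{L}_{\overline{\vec\lambda}}$ on $(L/(B\cap L))^3$, so the $L$-invariants recover $(V(\overline{\lambda_1})\otimes V(\overline{\lambda_2})\otimes V(\overline{\lambda_3}))^L$. Finally, the vanishing of the Hilbert--Mumford weight forces the one-dimensional central torus of $L$ outside $L^{ss}$ to act trivially on the invariant space, so $L$-invariants and $L^{ss}$-invariants coincide.

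The hardest step is verifying that the restriction map from $G$-invariant sections on $(G/B)^3$ to $L$-invariant sections on the $\tau$-fixed locus is an isomorphism, rather than merely injective --- one must rule out obstructions from the $\tau$-positive weight spaces in the tangent directions. The cohomological hypothesis (\ref{product}) is precisely the input that eliminates such obstructions, and making this rigorous, either via Kempf--Ness or via an explicit filtration argument on sections graded by $\tau$-weight, is the technical core of Roth's theorem.
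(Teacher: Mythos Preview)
The paper does not give its own proof of this theorem: it is stated as a citation of Roth \cite{Roth}, with the remark that Roth's original result is more general (covering non-maximal parabolics as well). So there is no in-paper argument to compare your proposal against.

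That said, your sketch is broadly in the spirit of the GIT/Luna-slice arguments that underlie results of this type (Roth, Ressayre, Belkale--Kumar). A few cautions if you intend to flesh it out. First, the natural geometric setting for the reduction is $(G/P)^3$ rather than $(G/B)^3$; the relevant semistability and slice analysis take place there, and the passage to $L^{ss}$-representations comes from identifying the fibre of $(G/B)^3\to(G/P)^3$ over the Schubert point with $(L/B_L)^3$. Second, your description of condition~(\ref{product}) as ``transversality under the deformed cup product'' is imprecise: the deformed product $\odot_0$ encodes a numerical Levi-movability condition, and what you actually need is that the structure constant equals $1$, which is a stronger statement than mere transversality of tangent spaces. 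Third, the surjectivity step you flag as hardest is genuinely the crux, and Roth's argument handles it by a careful cohomological vanishing rather than a direct Kempf--Ness appeal; your outline does not yet indicate how the hypothesis $[X_{w_1}^P]\odot_0[X_{w_2}^P]\odot_0[X_{w_3}^P]=[X_e^P]$ (as opposed to a nonzero multiple of $[X_e^P]$) enters to force the restriction map to be an isomorphism rather than just injective.
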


Roth's original theorem is more general; there are lower-dimensional regular faces $\mathcal{F}(\vec w,P)$ of $\mathcal{C}(G)$ coming from non-maximal parabolics $P$, and the same theorem holds there, too. The following application of Roth's theorem was brought to the author's attention by S. Kumar; details were discussed by the author and P. Belkale: 

\begin{corollary}\label{linchpin}
Suppose $\vec\lambda\in \mathcal{C}(G)$ lies on a regular face $\mathcal{F}(\vec w,P)$, and suppose the saturation conjecture holds for $L^{ss}$. Then $\vec\lambda\in \mathcal{R}(G)$. 
\end{corollary}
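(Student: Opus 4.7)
The strategy is a two-step application of Theorem~\ref{roreduct}: use it to pass from $G$ to $L^{ss}$, apply the hypothesis that saturation holds on $L^{ss}$, then apply Theorem~\ref{roreduct} again to return to $G$.

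First, since $\vec\lambda\in \mathcal{C}(G)$, pick $N>0$ with $(V(N\lambda_1)\otimes V(N\lambda_2)\otimes V(N\lambda_3))^G\ne 0$. The scaled triple $N\vec\lambda$ still lies on $\mathcal{F}(\vec w,P)$ (faces are closed under positive rescaling), so Theorem~\ref{roreduct} yields
$$
(V(N\lambda_1)\otimes V(N\lambda_2)\otimes V(N\lambda_3))^G \simeq (V(N\overline{\lambda_1})\otimes V(N\overline{\lambda_2})\otimes V(N\overline{\lambda_3}))^{L^{ss}},
$$
so the right-hand side is nonzero. Each $\overline{\lambda_j}=w_j^{-1}\lambda_j$ is $L^{ss}$-dominant because $w_j\in W^P$ sends the simple roots in $\Delta(P)$ to positive roots (so $\overline{\lambda_j}(\alpha^\vee)=\lambda_j(w_j\alpha^\vee)\ge 0$ for $\alpha\in \Delta(P)$). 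Once the root-lattice condition is verified we will therefore have $\overline{\vec\lambda}\in \mathcal{C}(L^{ss})$.

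To verify that root-lattice condition, I would use the standard identity $w^{-1}\mu-\mu\in Q(G)$ for any $w\in W$ and any weight $\mu$, where $Q(G)$ denotes the root lattice of $G$. Summing over $j$ and combining with the assumption $\sum_j\lambda_j\in Q(G)$ gives $\sum_j\overline{\lambda_j}\in Q(G)$. Write $\sum_j\overline{\lambda_j}=\sum_k c_k\alpha_k$ with $c_k\in \Z$ and evaluate both sides at $x_i$ (where $P=P_i$): the defining facet equation of $\mathcal{F}(\vec w,P)$ reads $\bigl(\sum_j\overline{\lambda_j}\bigr)(x_i)=0$, and since $\alpha_k(x_i)=\delta_{ki}$ this forces $c_i=0$. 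Hence $\sum_j\overline{\lambda_j}\in \Z\langle\Delta(P)\rangle$, which is exactly the root lattice of $L^{ss}$.

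With $\overline{\vec\lambda}\in \mathcal{C}(L^{ss})$ established, the saturation hypothesis for $L^{ss}$ upgrades this to $\overline{\vec\lambda}\in \mathcal{R}(L^{ss})$, i.e., $(V(\overline{\lambda_1})\otimes V(\overline{\lambda_2})\otimes V(\overline{\lambda_3}))^{L^{ss}}\ne 0$. Applying Theorem~\ref{roreduct} once more, now to $\vec\lambda$ itself (which is on $\mathcal{F}(\vec w,P)$), produces $(V(\lambda_1)\otimes V(\lambda_2)\otimes V(\lambda_3))^G\ne 0$, giving $\vec\lambda\in \mathcal{R}(G)$. The only genuinely technical step I anticipate is the lattice bookkeeping matching the root-lattice conditions built into $\mathcal{C}(G)$ and $\mathcal{C}(L^{ss})$; the rest is a direct appeal to Roth's theorem and the saturation hypothesis for $L^{ss}$.
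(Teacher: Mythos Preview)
Your proof is correct and follows essentially the same route as the paper's: apply Theorem~\ref{roreduct} at level $N$, check that $\overline{\vec\lambda}$ satisfies the root-lattice condition for $L^{ss}$ (using $\lambda - w\lambda\in Q(G)$ together with the facet equation), invoke saturation for $L^{ss}$, and apply Theorem~\ref{roreduct} again at level $1$. Your write-up is slightly more explicit than the paper's in two places---you spell out why each $\overline{\lambda_j}$ is $L^{ss}$-dominant, and you extract the vanishing of the $\alpha_i$-coefficient via the pairing with $x_i$---but the argument is the same.
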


\begin{proof}
Because $\vec\lambda\in \mathcal{C}(G)$, there exists $N>0$ so that 
$$
(V(N\lambda_1)\otimes V(N\lambda_2)\otimes V(N\lambda_3))^G\ne 0.
$$
Since $(N\lambda_1,N\lambda_2,N\lambda_3)$ is also in $\mathcal{F}(\vec w,P)$, and since $\overline{N\lambda_j} = N\overline{\lambda_j}$, Theorem \ref{roreduct} gives
$$
(V(N\overline{\lambda_1})\otimes V(N\overline{\lambda_2})\otimes V(N\overline{\lambda_3}))^{L^{ss}}\ne 0.
$$
 
%%citeme
 
Recall that, for any $w\in W$ and dominant weight $\lambda$, $\lambda-w\lambda$ is in the root lattice (see \cite{Hu}). Therefore 
$$
\sum \lambda_j - \sum w_j^{-1}\lambda_j = \sum (\lambda_j-w_j^{-1}\lambda_j)
$$
is in the root lattice. Since $\lambda_1+\lambda_2+\lambda_3$ is in the root lattice, $\overline{\lambda_1}+\overline{\lambda_2}+\overline{\lambda_3}$ is in the root lattice for $G$. Furthermore, the equation defining $\mathcal{F}(\vec w,P)$ implies that $\overline{\lambda_1}+\overline{\lambda_2}+\overline{\lambda_3}$ is indeed in the root lattice for $L^{ss}$. 
So $(\overline{\lambda_1},\overline{\lambda_2},\overline{\lambda_3})\in \mathcal{C}(L^{ss})$ and therefore also lies in $\mathcal{R}(L^{ss})$. Thus 
$$
(V(\overline{\lambda_1})\otimes V(\overline{\lambda_2})\otimes V(\overline{\lambda_3}))^{L^{ss}}\ne 0,
$$
and the result follows from another application of Theorem \ref{roreduct}. 
\end{proof}

\section{Calculation methods}\label{CALC}

We discuss here the methods used to generate the inequalities (\ref{ineqs}) and rays according to (\ref{tI}) and (\ref{tII}). A ring isomorphism $H^*(G/B)\simeq R/J$ is described thanks to \cite{BGG}, and we explain a method for our specific computations in $R/J$ using only arithmetic. We then indicate how a computer might use these calculations to explicitly parametrize the desired inequalities. 

\subsection{Polynomial realization of $H^*(G/P)$}

The ring $H^*(G/P)$ may be described by polynomials, cf. \cite{BGG}.
There is a ring homomorphism $\pi^*:H^*(G/P)\to H^*(G/B)$ induced by the standard projection $\pi:G/B\to G/P$. Because of the Bruhat decomposition, $\pi^*$ is an injection, and it satisfies
$$
\pi^*\left([X_{w_0ww_0^P}^P]\right) = [X_{w_0w}]
$$
for any $w\in W^P$. Furthermore, there is a ring isomorphism 
\begin{align}\label{isom}
R/J \simeq H^*(G/B;\Q),
\end{align}
where $R = \text{Sym}^\bullet(\mathfrak{h}^*) = \Q[\alpha_i]$ and $J$ is the ideal generated by all $W$-invariant polynomials with no constant term. Since $H^*(G/B)$ is a free $\Z$-module, $H^*(G/B;\Q) = H^*(G/B)\otimes \Q$ and no products in $H^*(G/B)$ are trivialized in $H^*(G/B;\Q)$; that is, we are free to calculate coefficients of products in $H^*(G/B;\Q) = R/J$ and interpret them as coefficients of the corresponding products in $H^*(G/B)$. 

\subsection{Polynomials and integration}

For any $\gamma\in \Phi$, define $A_\gamma:R\to R$ by 
$$
A_\gamma(f) = \frac{f-\sigma_\gamma f}{\gamma},
$$
where $\sigma_\gamma\in W$ is the reflection across the hyperplane perpendicular to $\gamma$. As shown in \cite{BGG}, $A_\gamma$ is well-defined and, if $w = \sigma_{\gamma_1}\cdots\sigma_{\gamma_t}$ is a minimal length decomposition of $w$, 
$$
A_w:=A_{\gamma_1}\circ\cdots\circ A_{\gamma_t}
$$
does not depend on the choice of minimal decomposition. The operators $A_w$ descend to well-defined operators on $R/J$, and one easily checks that $A_\gamma^2=0$. These operators generate a good basis of $R/J$:
\begin{definition}
Define $\tilde P_{w_0} = \frac{1}{|W|} \prod_{\alpha\in \Phi^+}\alpha\in R$, and define 
$$
\tilde P_w:=A_{w^{-1}w_0}P_{w_0}
$$
for all other $w\in W$. Let $P_w$ denote the image of $\tilde P_w$ in $R/J$. 
\end{definition}

We record various properties of the $P_w$:
\begin{proposition}\label{oldie}
\begin{enumerate}[label=(\alph*)]
\item The collection $\{P_w\}$ forms a $\Q$-basis for $R/J$.
\item Under the isomorphism (\ref{isom}), $P_w\mapsto [X_{w_0w}]$.
\item Each $\tilde P_w$ is homogeneous of degree $\ell(w)$.
\item Any $f\in R$ may be written as 
$$
f=\sum\tilde P_w f_w,
$$
where each $f_w$ is $W$-invariant. 
\item For any $w\in W$, $P_wP_{ww_0}=P_{w_0}$.
\end{enumerate}
\end{proposition}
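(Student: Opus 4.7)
The plan is to tackle (c) as a direct computation, derive (a) and (d) jointly from the Chevalley--Shephard--Todd theorem together with Borel's Hilbert-series count, prove (b) by descending induction following \cite{BGG}, and conclude (e) from Poincar\'e duality in $H^*(G/B,\Q)$. Part (c) is straightforward: if $f\in R$ is homogeneous of degree $d$, then $f-\sigma_\gamma f$ is homogeneous of degree $d$ and vanishes on the hyperplane $\gamma=0$, hence is divisible by $\gamma$; so $A_\gamma f$ has degree $d-1$. Starting from $\tilde P_{w_0}$ of degree $|\Phi^+|=\ell(w_0)$ and applying the $\ell(w^{-1}w_0)=\ell(w_0)-\ell(w)$ simple divided differences that compose to $A_{w^{-1}w_0}$ yields a polynomial of degree exactly $\ell(w)$.

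For (a) and (d), the key input is Chevalley--Shephard--Todd: $R$ is a free graded module over $R^W$ of total rank $|W|$, and any set of elements of $R$ whose images in $R/J=R/R^W_+R$ form a $\Q$-basis descends to a free $R^W$-basis of $R$. By Borel's theorem the graded piece $(R/J)_d$ has $\Q$-dimension $\#\{w\in W:\ell(w)=d\}$; combined with (c), this reduces (a) to proving linear independence of the $\tilde P_w$ modulo $J$. That in turn follows from the composition law $A_uA_v=A_{uv}$ (valid when $\ell(uv)=\ell(u)+\ell(v)$) applied to the defining formula $\tilde P_w=A_{w^{-1}w_0}\tilde P_{w_0}$: the resulting relations $A_u\tilde P_w\equiv \tilde P_{wu^{-1}}\pmod J$ (when the lengths subtract) and $\equiv 0$ otherwise exhibit a dual-basis structure and hence independence. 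Part (d) is then immediate.

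Part (b) is the heart of the matter, originally proved in \cite{BGG}, and I would establish it by descending induction on $\ell(w)$. The base case $w=w_0$ is Borel's identification of the top-degree generator $\tilde P_{w_0}\bmod J$ with the point class $[X_e]$ under the presentation (\ref{isom}). The inductive step relies on the geometric interpretation of each simple divided difference $A_\alpha$ on $H^*(G/B,\Q)$ as the pushforward-pullback $\pi_\alpha^*(\pi_\alpha)_*$ along the minimal parabolic projection $\pi_\alpha:G/B\to G/P_\alpha$; this operator acts on Schubert classes by $[X_u]\mapsto [X_{s_\alpha u}]$ when $\ell(s_\alpha u)=\ell(u)+1$ and by $0$ otherwise. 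Iterating along any reduced decomposition of $w^{-1}w_0$ transports $[X_e]$ to $[X_{w_0w}]$, matching the polynomial construction.

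For (e), under the identification in (b) the identity $P_wP_{ww_0}=P_{w_0}$ becomes the assertion that a cup product of two complementary-dimensional Schubert classes equals the point class $[X_e]=[pt]$, which is Poincar\'e duality for the Schubert basis of $H^*(G/B,\Q)$. The main obstacle is (b): the geometric realization of divided differences as pushforward-pullback operators along the $\mathbb{P}^1$-fibrations $\pi_\alpha$ is the central technical input and is usually checked directly on these fibrations. Granting it, the remaining parts follow from standard algebraic (Chevalley--Shephard--Todd) and topological (Borel, Poincar\'e duality) facts.
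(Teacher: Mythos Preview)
The paper does not give a proof of this proposition: it is simply recorded as a list of known facts from \cite{BGG}, so there is nothing to compare against at the level of argument. Your sketch is essentially the standard \cite{BGG} development, and parts (a)--(d) are handled correctly. In particular, the nil-Coxeter identity $A_u\tilde P_w=\tilde P_{wu^{-1}}$ when $\ell(wu^{-1})=\ell(w)-\ell(u)$ and $=0$ otherwise does give the dual-basis relation $A_w\tilde P_v=\delta_{w,v}\,\tilde P_e$ for $\ell(w)=\ell(v)$, from which (a) follows; (d) is then Chevalley--Shephard--Todd, and your inductive argument for (b) via $\pi_\alpha^*(\pi_\alpha)_*$ is exactly the \cite{BGG} mechanism.

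There is, however, a genuine gap in your treatment of (e). You write that ``$P_wP_{ww_0}=P_{w_0}$ becomes the assertion that a cup product of two complementary-dimensional Schubert classes equals the point class,'' but Poincar\'e duality does not say that \emph{any} two complementary Schubert classes pair to $[X_e]$; it says that the dual of $[X_u]$ in the Schubert basis is $[X_{w_0u}]$. Translating through (b), this yields $P_w\cdot P_{w_0w}=P_{w_0}$, with $w_0$ multiplying on the \emph{left}. The statement $P_wP_{ww_0}=P_{w_0}$ would require $w_0ww_0=w$, which fails already for $w=s_1$ in type $A_2$. In other words, had you actually carried out the index check you describe, you would have discovered that (e) as printed contains a typo. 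The identity the paper actually uses later (in the derivation of Corollary \ref{useful}) is indeed $[X_{w_0^P}]\cdot[X_{w_0w_0^P}]=[X_e]$, i.e.\ the left-multiplication form. So your method is right, but you should not wave your hands at the final step: verify which dual index Poincar\'e duality produces and state (e) accordingly.
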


Now define a linear functional $\Psi:R\to \Q$ as follows:
$$
\Psi(f) = \frac{1}{\prod_{\alpha\in \Phi^+}\alpha} \sum_{\sigma\in W}(-1)^{\ell(\sigma)}\sigma(f)\Bigg|_{0},
$$
where $\big|_0$ means evaluation of a polynomial in $ \text{Sym}^\bullet(\mathfrak{h}^*)$ at $0\in \mathfrak{h}$. It is known that the linear operators on $R$
$$
A_{w_0} ~~~~\text{ and }~~~~ \frac{1}{\prod_{\alpha\in \Phi^+}\alpha} \sum_{\sigma\in W}(-1)^{\ell(\sigma)}\sigma(\cdot)
$$
coincide (see, for example, \cite{Lasc}). The following properties of $\Psi$ follow readily. 

\begin{proposition}\label{orange}
The map $\Psi$ is well-defined, and $\Psi(f)$ is the $P_{w_0}$-coefficient of $\bar f\in R/J$. If $f\in R$ has degree $\le \deg \tilde P_{w_0}$, evaluation at $0$ may be replaced by evaluation at any element of $\mathfrak{h}_\Q$. 
\end{proposition}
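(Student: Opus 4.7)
The plan is to reduce everything to the identity $\Psi(f) = A_{w_0}(f)|_0$, which is available from the stated coincidence of $A_{w_0}$ with the symmetrizing operator $\tfrac{1}{\prod\alpha}\sum(-1)^{\ell(\sigma)}\sigma(\cdot)$, and then exploit the decomposition from Proposition \ref{oldie}(d). First I would dispose of well-definedness: the sum $\sum_\sigma(-1)^{\ell(\sigma)}\sigma(f)$ is $W$-antisymmetric, hence divisible by $\prod_{\alpha\in\Phi^+}\alpha$, so the quotient is a genuine polynomial in $R$, and evaluation at $0$ returns a rational number. Equivalently, since $A_{w_0}$ maps $R$ into $R$, well-definedness of $\Psi$ is automatic.

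To identify $\Psi(f)$ with the $P_{w_0}$-coefficient of $\bar f$, I would decompose $f = \sum_{w\in W}\tilde P_w f_w$ with each $f_w\in R^W$ via Proposition \ref{oldie}(d). Using the Leibniz rule $A_\gamma(gh) = A_\gamma(g)h + \sigma_\gamma(g)A_\gamma(h)$ iteratively along a reduced expression for $w_0$, and noting that $A_\gamma$ annihilates $\sigma_\gamma$-invariants (hence all of $R^W$), one obtains $A_{w_0}(\tilde P_w f_w) = A_{w_0}(\tilde P_w)\cdot f_w$ for every $w$. Since $A_{w_0}$ lowers degree by $\ell(w_0)$, the term $A_{w_0}(\tilde P_w)$ vanishes for $w\ne w_0$ on degree grounds, while $A_{w_0}(\tilde P_{w_0}) = \tilde P_e$ by the definition of $\tilde P_e$. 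To see $\tilde P_e = 1$, note that Proposition \ref{oldie}(e) with $w=e$ gives $P_e\cdot P_{w_0}=P_{w_0}$ in $R/J$, forcing $P_e=1$; since $\tilde P_e\in R$ is a degree-zero representative, $\tilde P_e=1$. Hence $A_{w_0}(f) = f_{w_0}$, so $\Psi(f) = f_{w_0}(0)$. On the other hand, each $W$-invariant of positive degree lies in $J$, so $\bar{f_w} = f_w(0)$ in $R/J$ and $\bar f = \sum_w f_w(0)\,P_w$; thus the $P_{w_0}$-coefficient of $\bar f$ is precisely $f_{w_0}(0) = \Psi(f)$.

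For the last assertion, suppose $\deg f \le \deg\tilde P_{w_0} = |\Phi^+|$. Then $A_{w_0}(f) = f_{w_0}$ is a polynomial of degree $\le 0$, i.e.\ a constant. Thus its value at $0$ agrees with its value at every $h\in\mathfrak{h}_\Q$, and one may evaluate wherever convenient.

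The main (minor) obstacles I anticipate are the iterated Leibniz reduction $A_{w_0}(\tilde P_w f_w) = A_{w_0}(\tilde P_w)\,f_w$ along a full reduced expression for $w_0$ (routine but bookkeeping-heavy: one must observe that $W$-invariance of $f_w$ persists under each $\sigma_\gamma$ so the ``correction'' term in the Leibniz rule vanishes at every step), and pinning down the normalization $\tilde P_e=1$; the former is a straightforward induction, and the latter I would handle via Proposition \ref{oldie}(e) as above, which avoids any direct computation with $\prod_{\alpha\in\Phi^+}\alpha$.
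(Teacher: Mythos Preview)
Your proposal is correct and follows essentially the same approach as the paper's main-body proof: use the identification $\Psi=A_{w_0}(\cdot)\big|_0$, decompose $f=\sum_w\tilde P_w f_w$ via Proposition~\ref{oldie}(d), and observe that $A_{w_0}$ kills all terms with $w\ne w_0$ while sending the top term to $f_{w_0}$, whose constant term is the $P_{w_0}$-coefficient of $\bar f$. Your version is somewhat more explicit---you spell out the Leibniz reduction $A_{w_0}(\tilde P_w f_w)=A_{w_0}(\tilde P_w)\,f_w$ for $f_w\in R^W$ and pin down the normalization $\tilde P_e=1$---whereas the paper simply asserts $A_{w_0}\tilde P_w=0$ for $w\ne w_0$ and moves on; but the skeleton is the same. (The paper also records, in an appendix, a second proof working directly with the antisymmetrization formula rather than invoking $A_{w_0}$.)
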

\begin{proof}
$\Psi$ is well-defined since $A_{w_0}$ is. For any $f\in R$, write $f = \sum \tilde P_w f_w$ as in Proposition \ref{oldie}(d). Since $A_{w_0}\tilde P_w=0$ for any $w\ne w_0$, $A_{w_0}f \big|_0 = f_{w_0}(0)$, which is the $P_{w_0}$-coefficient of $\bar f$ in $R/J$. 

If $f$ has degree $\le \deg(\tilde P_{w_0})$, then $A_{w_0}f$ is a constant by degree considerations and evaluation at $0$ may be replaced with evaluation at any element of $\mathfrak{h}_\Q$. 

\end{proof}

\begin{remark}See the appendix for another, more direct, derivation of the above proposition. \end{remark}
Since $\Psi$ vanishes on $J$, we write $\Psi$ again for the induced operator $R/J\to \Q$. The following corollaries explain that $\Psi$ may be viewed as integration of forms on $G/B$ and how this is useful for products in $H^*(G/P)$. 

\begin{corollary}
Viewed as a linear functional $H^*(G/B;\Q)\to \Q$, $\Psi$ is the same as capping with the fundamental class $\mu(X_e)\in H_*(G/B;\Q)$. 
\end{corollary}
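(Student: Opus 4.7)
The plan is to chain together Proposition \ref{orange} with Proposition \ref{oldie}(b) and the Poincar\'e duality of Schubert bases. First, Proposition \ref{orange} identifies $\Psi(f)$, for $f \in R/J$, with the coefficient of $P_{w_0}$ in the expansion of $f$ in the $\Q$-basis $\{P_w\}_{w \in W}$. Under the isomorphism (\ref{isom}), Proposition \ref{oldie}(b) carries $P_{w_0}$ to the Schubert class $[X_{w_0 \cdot w_0}] = [X_e]$, which is the top-degree cohomology class on $G/B$. Hence $\Psi$ becomes the linear functional on $H^*(G/B;\Q)$ that extracts the coefficient of $[X_e]$ in the Schubert-basis expansion of any class (so in particular $\Psi$ vanishes on classes of less than top degree, for which this coefficient is forced to be $0$).

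Second, I would recognize this coefficient-extraction as precisely the Kronecker/cap-product pairing with the fundamental class of $G/B$. Indeed, under Poincar\'e duality the Schubert cohomology basis $\{[X_w]\}$ is dual to the Schubert homology basis $\{\mu(X_w)\}$ in the sense that $\langle [X_v], \mu(X_w)\rangle = \delta_{v,w}$. Taking $w=w_0$, for which $\mu(X_{w_0})$ is the fundamental class of $G/B$, yields $\Psi(\alpha) = \langle \alpha, \mu(X_{w_0})\rangle$, i.e., $\Psi$ is integration of $\alpha$ over $G/B$. This is the content of the corollary in the author's notation.

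Since every input is already in place, the argument is essentially bookkeeping: there is no genuine obstacle. The only subtlety worth flagging is to keep straight the ``$w \mapsto w_0 w$'' index shift in Proposition \ref{oldie}(b), which is precisely what guarantees that the top-degree polynomial basis element $P_{w_0}$ matches the top-degree Schubert class $[X_e]$ and not the unit $[X_{w_0}] \in H^0$; getting this right is what makes $\Psi$ agree with integration rather than, say, the augmentation functional picking off the degree-zero part.
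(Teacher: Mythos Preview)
Your overall strategy is exactly what the paper has in mind: combine Proposition~\ref{orange} ($\Psi$ reads off the $P_{w_0}$-coefficient) with Proposition~\ref{oldie}(b) ($P_{w_0}\mapsto[X_e]$) to conclude that $\Psi$, transported to $H^*(G/B;\Q)$, is the functional extracting the $[X_e]$-coefficient, i.e.\ integration over $G/B$. That part is fine and is precisely the implicit argument behind the corollary.

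However, your second paragraph contains a genuine error. The Kronecker pairing does \emph{not} satisfy $\langle[X_v],\mu(X_w)\rangle=\delta_{v,w}$. Recall $[X_v]\in H^{2(\dim G/B-\ell(v))}$ while $\mu(X_w)\in H_{2\ell(w)}$, so the pairing can be nonzero only when $\ell(v)+\ell(w)=\dim G/B$; and in that case, since $\mu(X_w)=[G/B]\cap[X_w]$,
\[
\langle[X_v],\mu(X_w)\rangle=\langle[X_v]\cup[X_w],[G/B]\rangle=\delta_{v,\,w_0w}.
\]
With your stated formula, pairing against $\mu(X_{w_0})$ would extract the $[X_{w_0}]$-coefficient, not the $[X_e]$-coefficient, so your own premises do not yield your conclusion. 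With the correct formula above, pairing against $\mu(X_{w_0})=[G/B]$ does extract the $[X_e]$-coefficient, so $\Psi(\alpha)=\langle\alpha,\mu(X_{w_0})\rangle$ is indeed right --- but you arrived there through two compensating slips. Finally, the corollary literally writes $\mu(X_e)$, not $\mu(X_{w_0})$; you should not wave this off as ``the author's notation'' but reconcile it explicitly (the next corollary, where $\mu(X_e)\cap c[X_e]=c$, makes clear the intended meaning is simply ``return the coefficient of $[X_e]$'').
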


\begin{corollary}
Given $w_1,w_2,w_3\in W$ such that $\ell(w_1)+\ell(w_2)+\ell(w_3)=\ell(w_0)$, the number $c$ in 
$$
[X_{w_0w_1}]\cdot[X_{w_0w_2}]\cdot[X_{w_0w_3}] = c[X_e]
$$
may be computed as 
$$
c = \mu(X_e)\cap c[X_e] = \Psi(\tilde P_{w_1}\tilde P_{w_2}\tilde P_{w_3});
$$
furthermore, since $\tilde P_{w_1}\tilde P_{w_2}\tilde P_{w_3}$ has degree no greater than $\ell(w_0)$, 
$$
c = \frac{1}{|W|\tilde P_{w_0}(h)}\sum_{\sigma\in W} (-1)^{\ell(\sigma)}\tilde P_{w_1}(\sigma^{-1}h) \tilde P_{w_2}(\sigma^{-1}h) \tilde P_{w_3}(\sigma^{-1}h)
$$
for any $h\in \mathfrak{h}_\Q$. 
\end{corollary}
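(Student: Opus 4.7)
The proof proposal is essentially an unwinding of the machinery already assembled. I would proceed in three short stages.

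First stage: identify the cup product with multiplication in $R/J$. Under the isomorphism (\ref{isom}), Proposition \ref{oldie}(b) sends $P_{w_i} \mapsto [X_{w_0 w_i}]$ and $P_{w_0} \mapsto [X_{w_0 w_0}] = [X_e]$. Since $\ell(w_1)+\ell(w_2)+\ell(w_3) = \ell(w_0) = \deg \tilde P_{w_0}$, the product $P_{w_1}P_{w_2}P_{w_3}$ lives in top degree, and expanding in the basis $\{P_w\}$ of Proposition \ref{oldie}(a) reduces to a single term: $P_{w_1}P_{w_2}P_{w_3} = c\, P_{w_0}$ in $R/J$, with the same $c$ appearing in the cohomology equation. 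The middle identity $c = \mu(X_e) \cap c[X_e]$ is then just the observation that $X_e$ is a point and that $\mu(X_e) \cap [X_e] = 1$.

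Second stage: extract $c$ via $\Psi$. By Proposition \ref{orange}, $\Psi$ applied to any lift of $\bar f \in R/J$ returns precisely the $P_{w_0}$-coefficient. Applying this to $f = \tilde P_{w_1}\tilde P_{w_2}\tilde P_{w_3}$ immediately yields $c = \Psi(\tilde P_{w_1}\tilde P_{w_2}\tilde P_{w_3})$.

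Third stage: evaluate $\Psi$ explicitly. The product $\tilde P_{w_1}\tilde P_{w_2}\tilde P_{w_3}$ is homogeneous of degree $\ell(w_1)+\ell(w_2)+\ell(w_3) = \ell(w_0) = \deg \tilde P_{w_0}$, so the second clause of Proposition \ref{orange} permits evaluation at an arbitrary $h \in \mathfrak{h}_\Q$ (avoiding the hyperplanes where the denominator vanishes). Using the definition of $\Psi$, the action $(\sigma f)(h) = f(\sigma^{-1}h)$, and the identity $\prod_{\alpha \in \Phi^+}\alpha = |W|\,\tilde P_{w_0}$ (which is just the definition of $\tilde P_{w_0}$), the displayed formula falls out directly.

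There is no real obstacle here: every ingredient — the polynomial model, the dictionary $P_w \leftrightarrow [X_{w_0 w}]$, the interpretation of $\Psi$ as reading off the top coefficient, and the degree-bounded evaluation-at-$h$ trick — has already been established in Propositions \ref{oldie} and \ref{orange}. The only mild care needed is the bookkeeping of the $w_0$-twist in the dictionary $P_w \leftrightarrow [X_{w_0 w}]$ to verify that it is indeed the $P_{w_0}$-coefficient (and not some other) that records the desired intersection number.
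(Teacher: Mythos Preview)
Your proposal is correct and is exactly the unwinding the paper intends: the corollary is stated without proof, relying directly on Propositions \ref{oldie} and \ref{orange} in precisely the way you describe. Your parenthetical caveat about avoiding the root hyperplanes (so that $\tilde P_{w_0}(h)\ne 0$) is in fact slightly more careful than the paper's own statement here---note that the paper does add this restriction explicitly in the very next corollary.
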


\begin{corollary}\label{useful}
Given $w_1,w_2,w_3\in W^P$ such that $\ell(w_1)+\ell(w_2)+\ell(w_3)=\ell(w_0^P)$, the number $c$ in 
$$
[X_{w_0w_1w_0^P}^P]\cdot[X_{w_0w_2w_0^P}^P]\cdot[X_{w_0w_3w_0^P}^P] = c[X_{e}^P]
$$
is the same as $c$ in (under $\pi^*$)
$$
[X_{w_0w_1}]\cdot[X_{w_0w_2}]\cdot[X_{w_0w_3}] = c[X_{w_0^P}],
$$
which is the number $c$ in 
$$
[X_{w_0w_1}]\cdot[X_{w_0w_2}]\cdot[X_{w_0w_3}]\cdot [X_{w_0w_0^P}] = c[X_{w_0^P}]\cdot [X_{w_0w_0^P}] = c[X_e];
$$
therefore 
$$
c = \frac{1}{|W|\tilde P_{w_0}(h)}\sum_{\sigma\in W} (-1)^{\ell(\sigma)}\tilde P_{w_1}(\sigma^{-1}h) \tilde P_{w_2}(\sigma^{-1}h) \tilde P_{w_3}(\sigma^{-1}h) \tilde P_{w_0^P}(\sigma^{-1}h)
$$
for any $h\in \mathfrak{h}_\Q$ such that $\tilde P_{w_0}(h)\ne 0$. 
\end{corollary}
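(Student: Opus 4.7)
The plan is to walk through the three displayed equalities in sequence, using the pullback $\pi^*$ to move the product from $H^*(G/P)$ into $H^*(G/B)$, converting it into a top-degree fourfold product, and then reading off $c$ from the polynomial formula for $\Psi$ supplied by Proposition \ref{orange}.

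First I would apply the injective ring homomorphism $\pi^*:H^*(G/P)\to H^*(G/B)$ to the left-hand identity. The formula $\pi^*([X_{w_0ww_0^P}^P])=[X_{w_0w}]$, valid for $w\in W^P$, handles each factor on the left immediately. For the right, I would observe that $w_0w_0^P$ is the longest element of $W^P$, so $e = w_0(w_0w_0^P)w_0^P$ and hence $\pi^*([X_e^P]) = [X_{w_0\cdot w_0w_0^P}] = [X_{w_0^P}]$. Because $\pi^*$ is injective, the scalar $c$ is preserved, giving the second displayed equation.

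Next, to clear $[X_{w_0^P}]$ from the right-hand side, I would multiply both sides by $[X_{w_0w_0^P}]$. The Schubert-class Poincar\'e duality in $H^*(G/B)$—encoded on the polynomial side by Proposition \ref{oldie}(e), $P_wP_{ww_0}=P_{w_0}$—gives $[X_{w_0^P}]\cdot[X_{w_0w_0^P}] = [X_e]$, producing the third displayed equation, a top-degree identity in $H^*(G/B)$ with the same scalar $c$.

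Finally, I would extract $c$ polynomially. Under the isomorphism (\ref{isom}) the fourfold product corresponds to $\overline{\tilde P_{w_1}\tilde P_{w_2}\tilde P_{w_3}\tilde P_{w_0^P}}\in R/J$, whose $P_{w_0}$-coefficient is $c$ and, by Proposition \ref{orange}, equals $\Psi(\tilde P_{w_1}\tilde P_{w_2}\tilde P_{w_3}\tilde P_{w_0^P})$. The length hypothesis ensures the total degree equals $\ell(w_0)=\deg\tilde P_{w_0}$, so Proposition \ref{orange} permits replacing evaluation at $0$ in the definition of $\Psi$ with evaluation at any $h\in\mathfrak{h}_\Q$ for which $\tilde P_{w_0}(h)\ne 0$. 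Writing out the definition of $\Psi$, substituting $\prod_{\alpha\in\Phi^+}\alpha = |W|\,\tilde P_{w_0}$, and using $\sigma(g)(h)=g(\sigma^{-1}h)$ then yields the stated formula. The main obstacle, though essentially bookkeeping, is the length/degree tracking across the three rewrites: in particular identifying $w_0w_0^P$ as the longest element of $W^P$ so that $\pi^*[X_e^P] = [X_{w_0^P}]$, and checking that the fourfold polynomial product lands in degree $\ell(w_0)$ so that the evaluation-at-$h$ clause of Proposition \ref{orange} is available.
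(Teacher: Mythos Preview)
Your proposal is correct and follows exactly the reasoning the paper embeds in the corollary's statement: transport along $\pi^*$, Poincar\'e duality against $[X_{w_0w_0^P}]$, and then the polynomial integration formula of Proposition~\ref{orange} (with the degree check $\ell(w_1)+\ell(w_2)+\ell(w_3)+\ell(w_0^P)=\ell(w_0)$). There is nothing to add; the paper gives no separate proof beyond this chain of identifications.
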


This last corollary is what we use to calculate the coefficient $c$ in cohomology products, via the method below. 

\subsection{Pseudocode for products and inequalities}

Given a computer package with sufficient knowledge of root systems and their associated Weyl groups (as available through {\tt Sage} \cite{sagemath}, for example), one can deduce the defining inequalities (\ref{ineqs}) for $\mathcal{C}(G)$ once one knows the set of all triples $(w_1,w_2,w_3)\in (W^P)^3$ satisfying (\ref{product}) (or, equivalently, (\ref{lightning})), for all maximal standard $P$. The question of computing the cup product in (\ref{lightning}) is reduced to computing a sum of polynomials evaluated on a fixed vector $h\in \mathfrak{h}_\Q$ as in Corollary \ref{useful}. 

Some math software (such as {\tt Sage}) is capable of polynomial manipulation and simplification. However, the following pseudocode illustrates that the need for polynomial handling can be replaced with rudimentary data storage and arithmetic. \\

{\tt 
dict = \{\};  {\color{blue}\# this dictionary will hold values of \textnormal{$\tilde P_{w}$} for each \textnormal{$w\in W$}.}

weylgroup.sort();  {\color{blue}\# list the elements of \textnormal{$W$} in order of decreasing length.}

h = rho;  {\color{blue}\# the half-sum of positive roots (or set h to anything not in the root hyperplanes)}

val = 1;

for a in positiveroots:

\hspace{0.2in}val = val*a(h);

val = val/len(weylgroup);

dict[weylgroup[0]] = [val*(-1)\string^length(s) for s in weylgroup];  {\color{blue}\# list of values for \textnormal{$\tilde P_{w_0}$}}

for w in weylgroup[1:]: {\color{blue} \# all except the longest element}

\hspace{0.2in}{\color{blue} \# find simple reflection \textnormal{$s_\gamma$} so that $\ell(ws_\gamma)>\ell(w)$. }

\hspace{0.2in}{\color{blue} \# then use $A_\gamma\tilde P_{ws_\gamma} = \tilde P_w$ to compute the values $\tilde P_w(th), t\in W$. }

\hspace{0.2in}for i in [1,...,rank]:

\hspace{0.4in}s = simplereflections[i];

\hspace{0.4in}if length(w*s) > length(w):

\hspace{0.6in}exit for

\hspace{0.2in}listofvals = [];

\hspace{0.2in}for t in weylgroup:

\hspace{0.4in}{\color{blue} \# here j(t) returns the index so that weylgroup[j(t)] = t.}

\hspace{0.4in}listofvals += [(dict[w*s][j(t)] - dict[w*s][j(s*t)])/simpleroots[i](h)];   

\hspace{0.2in}dict[w] = listofvals;\\\\

}

The dictionary {\tt dict} now contains a list for each $w\in W$; that list is the set of values $\tilde P_w(t.h)$ where $t$ ranges over all elements of $W$. The above algorithm can be quasi-parallelized: subsequent dictionary entries need only a single previous entry to be populated before going forward. Integration is now straightforward:\\

{\tt  
def integrate(w1,w2,w3,i):  {\color{blue} \# here i is such that $P=P_i$.}

\hspace{0.2in}{\color{blue} \# Below w0(i) is the longest element of $W_P$.}

\hspace{0.2in}sum = 0;

\hspace{0.2in}for j in [0,...,len(weylgroup)-1]:

\hspace{0.4in}sum += (-1)\string^length(weylgroup[j])*dict[w1][j]*dict[w2][j]*dict[w3][j]*dict[w0(i)][j];

\hspace{0.2in}return sum/(len(weylgroup)*dict[w0][0]);\\\\

}

The algorithm for generating the inequalities coming from (\ref{ineqs}) is also straightforward:\\

{\tt 
ineqs = [];

for i in [1,...,rank]:

\hspace{0.2in}for w1,w2,w3 in weylgroup:  {\color{blue} \# such that $\ell(w_1)+\ell(w_2)+\ell(w_3)+\ell(w_0^P) = \ell(w_0)$}

\hspace{0.4in}c = integrate(w1,w2,w3,i);

\hspace{0.4in}if c == 1:

\hspace{0.6in}if (chi(w1)+chi(w2)+chi(w3)-chi(1))(x(i)) == 0:

\hspace{0.8in}(v1,v2,v3) = (w0*w1*w0(i),w0*w2*w0(i),w0*w3*w0(i));

\hspace{0.8in}ineqs += [v1*x(i)+v2*x(i)+v3*x(i)];  {\color{blue} \# express the $v_jx_i$ in the $\omega_k$s, then concatenate. }

}

\subsection{Method for rays}

The usage of formulas (\ref{tI}) and (\ref{tII}) is straightforward to implement. The intersection numbers $c$ in (\ref{product2}) are calculated exactly by the integration described above. These numbers give the type I rays. The type II rays do not require the product method; they rely on complete knowledge of the type I rays, knowledge of rays from the Levi, and ability to calculate the Weyl group action and pairing with $\mathfrak{h}$ of elements of $\mathfrak{h}^*$. All this can also be accomplished in {\tt Sage}. 

\section{Proof of Theorem \ref{Maine}}\label{expl}

\subsection{Proof of part (a)}

Via computer (code written in {\tt Sage 8.0} \cite{sagemath}), we obtained the following conditions governing the cone $\mathcal{C}(\text{Spin}(10))$:

\begin{enumerate}[label=$\bullet$]
\item $1967$ inequalities coming from (\ref{ineqs}), using the algorithm described above
\item $15$ chamber inequalities
\item $2$ equalities (for ensuring the sum is in the root lattice)
\end{enumerate}

Submitting these inequalities to the freely available software {\tt Normaliz} \cite{Normaliz}, and with the aid of supercomputer {\tt Longleaf}, we found that the Hilbert basis for $\mathcal{C}(\text{Spin}(10))$ consists of $505$ elements, all of which lie on some regular facet. The computation failed to complete on a regular computer but successfully finished in $4$ hours on the supercomputer. 

The possible $L^{ss}$ subgroups arising from regular facets are of the following types: 
$D_4$, $A_1\times A_3$, $A_2\times A_1\times A_1$, and $A_4$.  It is known that the saturation conjecture holds for each of these (see \cite{KKM}, \cite{KT}), so by Corollary \ref{linchpin}, each Hilbert basis element $\vec\lambda$ is in $\mathcal{R}(G)$. This shows $\mathcal{C}(G)\subseteq \mathcal{R}(G)$, and the result follows. 

\begin{remark}
We also checked explicitly (using the freely available software {\tt LiE} \cite{LiE}) that each Hilbert basis element $\vec\lambda \in \mathcal{R}(G)$. 
\end{remark}

\subsection{Proof of part (b)}

Via computer (code written in {\tt Sage 8.0} \cite{sagemath}), we used formulas (\ref{tI}) and (\ref{tII}) to produce the extremal rays of $\mathcal{C}(\operatorname{Spin}(12))$. (Note: this still requires determination of all $w_1,w_2,w_3,P$ satisfying (\ref{product}), which is as computable as knowledge of the inequalities.) Because each ray lies on multiple regular facets, and because formula (\ref{tII}) possibly produces non-extremal rays, the obtained set of elements of $\mathcal{C}(\operatorname{Spin}(12))$ was not minimal in generating $\mathcal{C}(\operatorname{Spin}(12))$ over $\Q_{\ge 0}$. Indeed, we obtained $105343$ such elements. We then used the freely available software {\tt Normaliz} \cite{Normaliz}, with the aid of supercomputer {\tt Longleaf}, and we found that $\mathcal{C}(\operatorname{Spin}(12))$ has the following features:

\begin{enumerate}[label=$\bullet$]
\item $3258$ extremal rays (minimal generating set over $\Q_{\ge 0}$)
\item $3470$ Hilbert basis elements (minimal generating set over $\Z_{\ge 0}$)
\item $28$ Hilbert basis elements not lying on a regular facet
\end{enumerate}

The computation finished in $16$ hours on the supercomputer. Here are those $28$ Hilbert basis elements not lying on a regular facet; they are listed up to $S_3$-action permuting the entries and $S_2$-action swapping indices $5$ and $6$ (i.e., the non-trivial Dynkin automorphism):

$$
\begin{array}{c}
(\omega_4,\omega_4,\omega_4)\\
(\omega_4+\omega_6,\omega_3+\omega_5,\omega_2+\omega_4)\\
%(\omega_4+\omega_5,\omega_3+\omega_6,\omega_2+\omega_4)\\
(2\omega_4,\omega_3+\omega_5+\omega_6,\omega_2+\omega_4)\\
(2\omega_4,\omega_3+\omega_5+\omega_6,2\omega_2+\omega_4)\\
(\omega_3+\omega_5+\omega_6,2\omega_3,2\omega_3)\\
\end{array}
$$

The possible $L^{ss}$ subgroups arising from regular facets are of the following types: 
$D_5$, $A_1\times D_4$, $A_2\times A_3$, $A_3\times A_1\times A_1$, and $A_5$.  It is known that the saturation conjecture holds for each of these (see \cite{KKM}, \cite{KT}, and part (a) of this theorem), so each Hilbert basis element $\vec\lambda$ of $\mathcal{C}(\operatorname{Spin}(12))$, except possibly the $28$ mentioned above, is in $\mathcal{R}(\operatorname{Spin}(12))$. 

Finally, we checked explicitly that each of the $28$ Hilbert basis elements not lying on a regular facet is an element of $\mathcal{R}(\operatorname{Spin}(12))$. We used the freely available software {\tt LiE} \cite{LiE} to accomplish this. In corroboration, we also checked that each of the $3470$ Hilbert basis elements does indeed lie in $\mathcal{R}(\operatorname{Spin}(12))$, for which we also used {\tt LiE}. This shows $\mathcal{C}(\operatorname{Spin}(12))\subseteq \mathcal{R}(\operatorname{Spin}(12))$, and the result follows. 

\subsection{Remarks on the computational method}

We did, along the way, find $12144$ inequalities governing the cone $\mathcal{C}(\op{Spin}(12))$. However,
we found it is not computationally feasible to use {\tt Normaliz} to find the Hilbert basis from these inequalities instead of from the rays mentioned above. That is to say, we use the formulas (\ref{tI}) and (\ref{tII}) of \cite{BKiers} in a crucial way. 

According to the documentation, {\tt Normaliz} presents two algorithms for obtaining the Hilbert basis of a cone: the ``primal'' algorithm and the ``dual'' algorithm. Generally speaking, the primal algorithm is optimal if the input description of the cone is via generators (rays) and the dual algorithm is optimal if the input description of the cone is via constraints (inequalities), but there are exceptions. Some routine time tests on types $A_1,A_2,A_3,A_4,A_5,D_4$, and $D_5$ show that actually the primal algorithm is far superior for our usage here, where input was via constraints. 

Further time tests on types $A_1,A_2,A_3,A_4,A_5,D_4$, and $D_5$ show that the primal algorithm used on the (very redundant) generators coming from (\ref{tI}) and (\ref{tII}) is far superior to using the primal algorithm with constraints as inputs. Also indicative of this, of course, is the case of type $D_6$: the generators method was computable and the constraints method was not. 

At least in these low-rank situations, then, it seems the combination of {\tt Normaliz}'s primal algorithm on generators coming from formulas (\ref{tI}) and (\ref{tII}) of \cite{BKiers} is most efficient in determining the Hilbert basis of $\mathcal{C}(G)$.

%\subsection{Verification of method}

%The author did use the $D_6$ method (that is, use formulas (\ref{tI}) and (\ref{tII}) to obtain rays and then {\tt Normaliz} to obtain minimal rays and Hilbert basis) to reproduce all the results listed below relevant to types $A$ and $D$. 

\section{Related Results}

\subsection{The saturated tensor cones for type $A$ of small rank}

Using a computer (code written in {\tt Sage 8.0} \cite{sagemath}) and both the inequalities and rays procedures described above, the following results were obtained for $G = \text{SL}(n+1)$ (type $A_n$), for $n=1,2,3,4,5$. The total number of inequalities is expressed as $a+b$, where $a$ is the number of inequalities coming from (\ref{ineqs}) and $b$ is the number of chamber inequalities (always $3\times$rank). In each case below, there is also one equality that ensures the sum in the root lattice condition. \\

\begingroup
\setlength{\tabcolsep}{10pt} % Default value: 6pt
\renewcommand{\arraystretch}{1.5} % Default value: 1
\begin{center}
\begin{tabular}{|c|c|c|c|c|}\hline
rank & total ineqs. & H.b. elements & extremal rays & H.b. elements not on a regular facet \\\hline\hline
$1$ & $3+3$ & $3$ & $3$ & $0$ \\\hline
$2$ & $12+6$ & $8$ & $8$ & $0$ \\\hline
$3$ & $41+9$& $18$ & $18$ & $0$ \\\hline
$4$ & $142+12$ & $42$ & $42$ & $0$ \\\hline
$5$ & $521+15$ & $112$ & $112$ & $0$ \\\hline
\end{tabular}
\end{center}~\\
\endgroup

The counts of inequalities for ranks $2$ and $3$ agree the results listed in \cite{Kumar} and \cite{KLM}. The number of extremal rays for rank $2$ agrees with \cite{KaLM}.

%\footnotetext[1]{cf. the $12$ non-chamber inequalities listed in \cite{Kumar}}
%\footnotetext[2]{cf. the $8$ rays listed in \cite{KaLM}}
%\footnotetext[3]{cf. the $50$ inequalities mentioned in \cite{KLM} (or the $41$ non-chamber inequalities listed in \cite{Kumar})}

\subsection{The saturated tensor cones for type $C$ of small rank}

In the same fashion, the following results were obtained for $G = \text{Sp}(2n)$ (type $C_n$), for $n=2,3,4,5$. In each case below, there is also one equality that ensures the sum in the root lattice condition. \\

\begingroup
\setlength{\tabcolsep}{10pt} % Default value: 6pt
\renewcommand{\arraystretch}{1.5} % Default value: 1
\begin{center}
\begin{tabular}{|c|c|c|c|c|}\hline
rank & total ineqs. & H.b. elements & extremal rays & H.b. elements not on a regular facet \\\hline\hline
$2$ & $18+6$& $13$ & $12$ & $1$ \\\hline
$3$ & $93+9$ & $58$ & $51$ & $1$ \\\hline
$4$ & $474+12$ & $302$ & $237$ & $2$ \\\hline
$5$ & $2421+15$ & $1598$ & $1122$ & $16$ \\\hline
\end{tabular}
\end{center}~\\
\endgroup

The results for ranks $2$ and $3$ above agree with those found in \cite{KaLM}, \cite{Kumar}, and \cite{KLM}. 

%\footnotetext[4]{cf. the $18$ (non-chamber) inequalities calculated in \cite{KaLM} (see also \cite{Kumar})}
%\footnotetext[5]{cf. the $12$ rays listed in \cite{KaLM}}
%\footnotetext[6]{cf. the $102$ inequalities mentioned in \cite{KLM} (or the $93$ non-chamber inequalities listed in \cite{Kumar})}
%\footnotetext[7]{cf. the $51$ rays listed in \cite{KLM}}

\begin{remark}
The saturated tensor cones for type $C$ and type $B$ are isomorphic due to the duality at the level of root systems, so the above data may be interpreted as results for type $B$ as well. 
\end{remark}

 \begin{remark}
It is known that the saturation conjecture fails for the aforementioned cones. For each of $n=2,3,4,5$, we verified this fact by finding Hilbert basis elements which fail to lie in $\mathcal{R}(\text{Sp}(2n))$. 
 \end{remark}

\subsection{Summary of results for type $D$}

The following table summarizes known features of the cones for type $D$ of small rank (starting at rank $4$):
~\\

\begingroup
\setlength{\tabcolsep}{10pt} % Default value: 6pt
\renewcommand{\arraystretch}{1.5} % Default value: 1
\begin{center}
\begin{tabular}{|c|c|c|c|c|}\hline
rank & total ineqs. & H.b. elements & extremal rays & H.b. elements not on a regular facet \\\hline\hline
$4$ & $294+12$ & $82$ & $81$ & $1$ \\\hline
$5$ & $1967+15$ & $505$ & $492$ & $0$ \\\hline
$6$ & $12144+18 $ & 3470 & 3258 & $28$ \\\hline
\end{tabular}
\end{center}~\\
\endgroup

The results for rank $4$ agree with the $306$ inequalities, $82$ H.b. elements, and $81$ extremal rays given in \cite{KKM}. 

The only Hilbert basis element for $\mathcal{C}(\text{Spin}(8))$ which does not lie on a regular facet is $(\omega_2,\omega_2,\omega_2)$. Because $\omega_{n-2}$ is self-dual for type $D_n$, $n$ even, the element $(\omega_{n-2},\omega_{n-2},\omega_{n-2})$ will always be a Hilbert basis element. Prior to obtaining the Hilbert basis and extremal rays for type $D_6$, we checked directly that $(\omega_4,\omega_4,\omega_4)$ does not lie on any regular facet for type $D_6$ as well. We na\"ively asked the following 
\begin{question}\label{bigQ}
Let $G$ be simple, simply-connected of type $D_n$.

For $n$ even: is $(\omega_{n-2},\omega_{n-2},\omega_{n-2})$ the only Hilbert basis element of $\mathcal{C}(G)$ not lying on a regular facet? 

For $n$ odd: are there never Hilbert basis elements of $\mathcal{C}(G)$ not lying on a regular facet? 
\end{question} 

The answer to this question is negative, at least for even $n$. There were indeed $27$ other Hilbert basis elements of $\mathcal{C}(\op{Spin}(12))$ not lying on a regular facet. 

\subsection{``Non-Fultonian'' Hilbert basis elements in $\mathcal{C}(\operatorname{Spin}(10))$}

\begin{definition}
Say a triple $(\lambda_1,\lambda_2,\lambda_3)\in \mathcal{C}(G)$ has the Fulton scaling property if 
$$
\dim (V(N\lambda_1)\otimes V(N\lambda_2)\otimes V(N\lambda_3))^G = 1,
$$
for every $N\ge 1$. Call such a triple ``Fultonian'' for short. 
\end{definition}

In type $A$, it is known that 
\begin{align}\label{FC}
\dim(V(\lambda_1)\otimes V(\lambda_2)\otimes V(\lambda_3))^G = 1 \implies (\lambda_1, \lambda_2,\lambda_3) \text{ is Fultonian.} \end{align}
This was conjectured by Fulton - hence the name - and first proved by Knutson-Tao-Woodward \cite{KTW}. The direct generalization of this conjecture for arbitrary $G$ does not hold; this implies that some cones $\mathcal{C}(G)$ contain non-Fultonian elements. We list here certain elements of $\mathcal{C}(\operatorname{Spin}(10))$ which are non-Fultonian, including some which cause the implication (\ref{FC}) to fail. All claims were verified using {\tt LiE} \cite{LiE}. 

 \subsubsection{} The following $13$ Hilbert basis elements $(\lambda_1,\lambda_2,\lambda_3)$ satisfy 
 \begin{align}\label{whoknows}
 \dim  (V(N\lambda_1)\otimes V(N\lambda_2)\otimes V(N\lambda_3))^G = \left\lfloor \frac{N}{2} \right\rfloor +1,
 \end{align}
 for $N = 1,2,\hdots,5$. They are listed only up to permutation:
\begin{center}
\begin{tabular}{cc}
$(\omega_2,\omega_3,\omega_3)$
&$(\omega_1+\omega_3,\omega_3,\omega_3)$\\
$(\omega_2,\omega_2,\omega_2)$&
$(2\omega_2,2\omega_3,\omega_2+\omega_4+\omega_5)$.
\end{tabular}
\end{center}

Therefore each of these Hilbert basis elements is non-Fultonian and, furthermore, fails implication (\ref{FC}). Interestingly, these $13$ Hilbert basis elements are the same $13$ ($=505-492$) which are not extremal rays. It is not known whether formula (\ref{whoknows}) holds for all $N\ge 1$ for these elements. 

\subsubsection{} The following $3$ Hilbert basis elements $(\lambda_1,\lambda_2,\lambda_3)$ satisfy 
\begin{align}\label{whoknows2}
 \dim  (V(N\lambda_1)\otimes V(N\lambda_2)\otimes V(N\lambda_3))^G = N+1,
 \end{align}
 for $N = 1,2,\hdots,5$. They are the $3$ permutations of the single element
 $$
 (\omega_3,\omega_3,\omega_4+\omega_5).
 $$
 Therefore each of these is non-Fultonian. It is not known whether formula (\ref{whoknows2}) holds in general for these three. 
 
 These Hilbert basis elements also give extremal rays of the cone. As discussed in \cite{BKiers}, all extremal rays of $\mathcal{C}(G)$ lie on a facet $\mathcal{F}$. The extremal rays on a facet may be classified as either ``Type I'' or ``Type II''; however, it is possible for rays to be Type I on one facet and Type II on another. Because every Type I ray is Fultonian, the three aforementioned Hilbert basis elements give examples of extremal rays which are not Type I on any facet. 

\appendix

\section{Another proof of Proposition \ref{orange}}
Directly from the definition of $\Psi$, one may deduce the properties in the proposition as follows. 
For a fixed $w\ne w_0$, write $w^{-1}w_0 = \sigma_\gamma\sigma_{\gamma_2}\cdots\sigma_{\gamma_t}$ as a reduced word. Then $\tilde P_w = A_{\gamma}Q$, where $Q = A_{\sigma_{\gamma_2}\cdots\sigma_{\gamma_t}}\tilde P_{w_0}$. In particular, $A_\gamma \tilde P_w = 0$. Now let $W'$ be a set of representatives for the cosets $W/\langle \sigma_\gamma\rangle$. We can therefore write
\begin{align*}
\frac{1}{\prod_{\alpha\in \Phi^+} \alpha} \sum_{\sigma \in W} (-1)^{\ell(\sigma)} \sigma(\tilde P_w)& = \frac{1}{\prod_{\alpha\in \Phi^+}\alpha} \sum_{\sigma\in W'} \left(
(-1)^{\ell(\sigma)} \sigma(\tilde P_w) - (-1)^{\ell(\sigma)} \sigma\sigma_\gamma(\tilde P_w)\right)\\
&=\frac{1}{\prod_{\alpha\in \Phi^+}\alpha} \sum_{\sigma\in W'} (-1)^{\ell(\sigma)} \sigma\left(\frac{\tilde P_w-\sigma_\gamma\tilde P_w}{\gamma}\right)\sigma(\gamma)\\
&= \frac{1}{\prod_{\alpha\in \Phi^+}\alpha} \sum_{\sigma\in W'} (-1)^{\ell(\sigma)}\sigma(\gamma)\sigma\left(A_\gamma \tilde P_w\right)\\
&= 0.
\end{align*}
Furthermore, one easily checks that 
\begin{align*}
\sum_{\sigma\in W} \frac{\sigma(\tilde P_{w_0})}{\sigma(\prod_{\alpha\in \Phi^+}\alpha)}&=1.
\end{align*}

Since the expression $\sum_{\sigma\in W} \frac{\sigma(f)}{\sigma(\prod_{\alpha\in \Phi^+}\alpha)}$ is linear in $f=\sum \tilde P_w f_w$, $\Psi(f)$ is well-defined and equals $f_{w_0}(0)$ (here we use that the $f_w$ are $W$-invariant). If $f$ is of degree $\le \deg \tilde P_{w_0}$, then $f_{w_0}$ can be assumed constant. In such a case, 
$$
\frac{1}{\prod_{\alpha\in \Phi^+} \alpha} \sum_{\sigma \in W} (-1)^{\ell(\sigma)} \sigma(f) = f_{w_0},
$$
and evaluating at any point of $\mathfrak{h}_\Q$ gives $\Psi(f)=f_{w_0}$, which is also the $P_{w_0}$-coefficient of $\bar f\in R/J$.

\begin{bibdiv}
\begin{biblist}

\bib{BKiers} {article} {
    AUTHOR = {Belkale, P.}
    AUTHOR = {Kiers, J.},
    TITLE = {Extremal rays in the Hermitian eigenvalue problem for arbitrary types},
    YEAR = {2018},
    NOTE = {arXiv:1803.03350},
    }

\bib{BK}{article} {
   AUTHOR = {Belkale, P.}
    AUTHOR =  {Kumar, S.},
     TITLE = {Eigenvalue problem and a new product in cohomology of flag
              varieties},
   JOURNAL = {Invent. Math.},
  FJOURNAL = {Inventiones Mathematicae},
    VOLUME = {166},
      YEAR = {2006},
    NUMBER = {1},
     PAGES = {185--228},
  %    ISSN = {0020-9910},
   %MRCLASS = {14M15 (20G05)},
 % MRNUMBER = {2242637},
 %MRREVIEWER = {Harry Tamvakis},
    %   DOI = {10.1007/s00222-006-0516-x},
      % URL = {http://dx.doi.org.libproxy.lib.unc.edu/10.1007/s00222-006-0516-x},
}

\bib{BGG}{article} {
    AUTHOR = {Bern\v ste\u\i n, I. N.}
    AUTHOR ={Gel\cprime fand, I. M.}
    AUTHOR=  {Gel\cprime fand, S. I.},
     TITLE = {Schubert cells, and the cohomology of the spaces {$G/P$}},
   JOURNAL = {Uspehi Mat. Nauk},
  FJOURNAL = {Akademiya Nauk SSSR i Moskovskoe Matematicheskoe Obshchestvo.
              Uspekhi Matematicheskikh Nauk},
    VOLUME = {28},
      YEAR = {1973},
    NUMBER = {3(171)},
     PAGES = {3--26},
    %  ISSN = {0042-1316},
   %MRCLASS = {14M15 (57F15)},
  %MRNUMBER = {0429933},
 %MRREVIEWER = {George R. Kempf},
}

\bib{Normaliz}{misc} {
   AUTHOR = {Bruns, W.}
   AUTHOR = {Ichim, B.}
   AUTHOR = {R\"omer, T.}
   AUTHOR = {Sieg, R.} 
   AUTHOR = {S\"oger, C.},
   TITLE = {Normaliz. Algorithms for rational cones and affine monoids},
   NOTE = {Available at \url{http://normaliz.uos.de}}
   }

\bib{E}{book} {
    AUTHOR = {Elashvili, A. G.},
    TITLE = {Invariant algebras},
    SERIES = {Advances in Soviet Math.},
    FJOURNAL = {Advances in Soviet Mathematics},
    VOLUME = {8},
    YEAR = {1992},
    PUBLISHER = {Amer. Math. Soc., Providence},
    PAGES = {57--63},
    }

\bib{Hu}{book} {
   AUTHOR = {Humphreys, J.},
   TITLE = {Introduction to Lie algebras and representation theory},
   SERIES = {Graduate Texts in Mathematics},
   VOLUME = {9},
   PUBLISHER = {Springer-Verlag, New York},
   YEAR = {1972},
%   PAGES = {xii+173}
}

\bib{KKM}{article} {
    AUTHOR = {Kapovich, M.}
    AUTHOR=  {Kumar, S.}
    AUTHOR=   {Millson, J. J.},
     TITLE = {The eigencone and saturation for {S}pin(8)},
   JOURNAL = {Pure Appl. Math. Q.},
  FJOURNAL = {Pure and Applied Mathematics Quarterly},
    VOLUME = {5},
      YEAR = {2009},
    NUMBER = {2, Special Issue: In honor of Friedrich Hirzebruch. Part
              1},
     PAGES = {755--780},
 %     ISSN = {1558-8599},
%   MRCLASS = {20G05},
%  MRNUMBER = {2508902},
 %MRREVIEWER = {William M. McGovern},
  %     URL = {https://doi.org/10.4310/PAMQ.2009.v5.n2.a7},
}

\bib{KaLM}{article}{
    AUTHOR = {Kapovich, M.}
    AUTHOR =  {Leeb, B.}
    AUTHOR = {Millson, J.},
     TITLE = {Convex functions on symmetric spaces, side lengths of polygons
              and the stability inequalities for weighted configurations at
              infinity},
   JOURNAL = {J. Differential Geom.},
  FJOURNAL = {Journal of Differential Geometry},
    VOLUME = {81},
      YEAR = {2009},
    NUMBER = {2},
     PAGES = {297--354},
%      ISSN = {0022-040X},
%   MRCLASS = {53D20 (14N15 22E46 53C35)},
%  MRNUMBER = {2472176},
%MRREVIEWER = {Athanase Papadopoulos},
 %      URL = {http://projecteuclid.org/euclid.jdg/1231856263},
}

\bib{KM}{article}{
    AUTHOR = {Kapovich, M.}
    AUTHOR = {Millson, J. J.},
    TITLE = {Structure of the tensor product semigroup},
    JOURNAL = {Asian J. Math.},
    FJOURNAL = {Asian Journal of Mathematics},
    VOLUME = {10},
    YEAR = {2006},
    PAGES = {493--540},
}

\bib{KT}{article} {
   AUTHOR = {Knutson, A.}
   AUTHOR = {Tao, T.},
   TITLE = {The honeycomb model of {$\operatorname{GL}_n(\C)$} tensor products. I. Proof of the saturation conjecture.},
   JOURNAL = {J. Amer. Math. Soc.},
   FJOURNAL = {Journal of the American Mathematical Society},
   VOLUME = {12},
   YEAR = {1999},
   NUMBER = {4},
   PAGES = {1055--1090},
}

\bib{KTW}{article} {
   AUTHOR = {Knutson, A.}
   AUTHOR = {Tao, T.}
   AUTHOR = {Woodward, C.},
   TITLE = {The honeycomb model of {$\operatorname{GL}_n(\C)$} tensor products. II. Puzzles determine the facets of the Littlewood-Richardson cone},
   JOURNAL = {J. Amer. Math. Soc.},
   FJOURNAL = {Journal of the American Mathematical Society},
   VOLUME = {17},
   YEAR = {2004},
   NUMBER = {1},
   PAGES = {19--48},
}

\bib{Kumar}{article} {
    AUTHOR = {Kumar, S.},
     TITLE = {A survey of the additive eigenvalue problem},
      NOTE = {With an appendix by M. Kapovich},
   JOURNAL = {Transform. Groups},
  FJOURNAL = {Transformation Groups},
    VOLUME = {19},
      YEAR = {2014},
    NUMBER = {4},
     PAGES = {1051--1148},
 %     ISSN = {1083-4362},
 %  MRCLASS = {14M15 (15A42 17B10 17B56 20G20 22E46 51E24)},
 % MRNUMBER = {3278861},
 %MRREVIEWER = {Christian Ohn},
 %      DOI = {10.1007/s00031-014-9287-4},
 %      URL = {http://dx.doi.org.libproxy.lib.unc.edu/10.1007/s00031-014-9287-4},
}

\bib{KLM}{article} {
   AUTHOR = {Kumar, S.}
   AUTHOR = {Leeb, B.}
   AUTHOR = {Millson, J.},
   TITLE = {The generalized triangle inequalities for rank 3 symmetric spaces of noncompact type},
   JOURNAL = {Contemp. Math.},
   JFOURNAL = {Contemporary Mathematics},
   VOLUME = {332},
   YEAR = {2003},
%   NUMBER =
   PAGES = {171--195},

}

\bib{Lasc} {book} {
    AUTHOR = {Lascoux, A.},
    TITLE = {Symmetric Functions and Combinatorial Operators on Polynomials},
    SERIES = {CBMS Regional Conference Series in Math.},
    VOLUME = {99},
    YEAR = {2003},
    PUBLISHER = {Amer. Math. Soc., Providence},
    }

\bib{LiE}{misc} {
   AUTHOR = {van Leeuwen, M.A.A.}
   AUTHOR = {Cohen, A.M.}
   AUTHOR = {Lisser, B.},
   TITLE = {LiE, A package for Lie Group Computations},
   NOTE = {URL: \url{httP://wwwmathlabo.univ-poitiers.fr/~maavl/LiE/}}
   }

\bib{PR}{article}{
   AUTHOR = {Pasquier, B.}
   AUTHOR = {Ressayre, N.}
   TITLE = {The saturation property for branching rules -- examples},
   JOURNAL = {Exp. Math.},
   FJOURNAL = {Experimental Mathematics},
   VOLUME = {22},
   YEAR = {2013},
   NUMBER = {3},
   PAGES = {299--312},
}

\bib{Ress}{article}{
   AUTHOR = {Ressayre, N.},
   TITLE = {Geometric invariant theory and the generalized eigenvalue problem},
   JOURNAL = {Invent. Math.},
   FJOURNAL = {Inventiones Mathematicae},
   VOLUME = {180},
   YEAR = {2010},
   NUMBER = {2},
   PAGES = {389--441},
}

\bib{Roth}{article}{
   AUTHOR = {Roth, M.},
   TITLE = {Reduction rules for Littlewood-Richardson coefficients},
   JOURNAL = {Inter. Math. Res. Not.},
   FJOURNAL = {International Mathematics Research Notices},
%   VOLUME = 
   YEAR = {2011},
   NUMBER = {18},
   PAGES = {4105--4134},
}

\bib{sagemath}{manual}{
%  KEY  = {SageMath},
  AUTHOR = {The Sage Developers},
  TITLE = {{S}ageMath, the {S}age {M}athematics {S}oftware {S}ystem ({V}ersion 8.0)},
  NOTE = {\url{ http://www.sagemath.org}},
  YEAR = {2017},
}

\end{biblist}
\end{bibdiv}
\vspace{0.05in}

\noindent
Department of Mathematics, University of North Carolina, Chapel Hill, NC 27599\\
{{email: jokiers@live.unc.edu}}

\end{document}